\newcommand{\R}{\mathbb{R}}
\newcommand{\N}{\mathbb{N}}
\theoremstyle{plain}
\newtheorem{theorem}{Theorem}[section]
\newtheorem{proposition}[theorem]{Proposition}
\newtheorem{corollary}[theorem]{Corollary}
\theoremstyle{definition}
\newtheorem{example}[theorem]{Example}
\theoremstyle{remark}
\newtheorem{remark}[theorem]{Remark}
\newcommand{\tos}{\rightrightarrows} 
\DeclareMathOperator*{\argmin}{arg\,min}
\DeclareMathOperator*{\co}{co}
\DeclareMathOperator*{\dist}{dist}
\DeclareMathOperator*{\nep}{NEP}
\DeclareMathOperator*{\gnep}{GNEP}
\DeclareMathOperator*{\Rgnep}{RGNEP}
\DeclareMathOperator{\gra}{gra}
\title{The generalized Nash game proposed by Rosen}
\author{ 
Carlos Calder\'on 
\thanks{Instituto de Matem\'atica y Ciencias Afines. Lima, Per\'u. Email: carlos.calderon@imca.edu.pe}
\and
John Cotrina  
\thanks{Universidad del Pac\'ifico, Lima, Per\'u. Email: cotrina\_je@up.edu.pe} 
}
\begin{document}

\maketitle

\begin{abstract}
We deal with the generalized Nash game proposed by Rosen, which is a game with strategy sets that are coupled across players through a shared constraint. A reduction to a classical game is shown, and as a consequence, Rosen's result can be deduced from the one given by Arrow and Debreu. We also establish necessary and sufficient conditions for a point to be a generalized Nash equilibrium employing the variational inequality approach.
Finally, some existence results are given in the non-compact case under coerciveness conditions.
\bigskip

\noindent{\bf Keywords: Generalized Nash games, Shared constraints,  Variational inequality, Coerciveness conditions.} 

\bigskip

\noindent{{\bf MSC (2010)}:  47J20, 49J40, 91A10, 91B50} 

\end{abstract}

\section{Introduction}
The Nash equilibrium problem (NEP in short) \cite{Nash} consists of a finite number of players, each player has a strategy set and an objective function depending not only on his/her decision but also on the decision of his/her rival players.  Arrow and Debreu \cite{Arrow-Debreu}  considered a kind of game in which the strategy set of each player also depends on the decision of his/her rival players, they called it \emph{Abstract Economy}. Nowadays, these games are called the generalized Nash equilibrium problem (GNEP in short), see \cite{Facchinei2007}. In 1965, Rosen \cite{Rosen} dealt with a particular generalized Nash equilibrium problem, where the strategy sets are coupled across players through a shared constraint, we denote it by RGNEP. Recently, more and more researchers are interested in the RGNEP because it models real problems such as  electricity markets, environmental games, and bilateral exchanges of bads, see for instance \cite{K-2005,CK-2015,Vardar2019,K-2016}.

Rosen \cite{Rosen} established an existence result for the RGNEP under continuity and convexity assumptions (see Theorem \ref{TR}), which is not a direct consequence of the one given by Arrow and Debreu \cite{Arrow-Debreu} (see Theorem \ref{A-D} and Example \ref{example1}).
However, we will reduce the RGNEP to a classical Nash game with two players. 

On the other hand, Facchinei \emph{et al.}  \cite{FACCHINEI2007159}  extended Rosen's result by considering pseudo-convexity (see Theorem \ref{FFP}) instead of the convexity of each objective function. In this work \cite{FACCHINEI2007159}, the authors reduced the RGNEP to a Stampacchia variational inequality problem. After that, Aussel and Dutta \cite{Aussel-Dutta} presented an existence result using semi strict quasi-convexity and continuity (see Theorem \ref{AD08}), which extends results given in \cite{Rosen,FACCHINEI2007159}. The authors in \cite{Aussel-Dutta} also reduced the RGNEP to a variational inequality problem using the adjusted normal cone \cite{AuHad2005}. In the same line, recently, Bueno \emph{et al.} \cite{BCC21} dealt with the quasi-convex case (see Theorem \ref{main-result}). Thus, we will present another existence result under quasi-convexity and pseudo-continuity, which is equivalent to the one given in \cite{BCC21}. Moreover, we will show the strong relationship with the one given by Arrow and Debreu \cite{Arrow-Debreu}.

Cavazzuti \emph{et al.} \cite{Cavazzuti} also dealt with the RGNEP, where by means of the Minty variational inequality, they established sufficient and necessary conditions for a point to be a generalized equilibrium, under differentiability assumption. Thus, we will extend the result given in \cite{Cavazzuti} to the continuous case, using for that normal cones instead of the gradient. 

Recently, the case of unbounded (hence, non-compact) sets was recently dealt in the GNEP under certain coerciveness condition, see \cite{AS-2017,CHS-2020,CZ-2018}. Motivated by these works we focus in the GNEP proposed by Rosen and obtain certain existence results.

The remainder of the paper is organized as follows. 
In Section \ref{preliminaries}, we give definitions of pseudo-continuity and generalized convexity for functions, and continuity for set-valued maps. Moreover, we present some results concerning optimization problems. 
In Section \ref{motivation}, we present the generalized Nash game proposed by Rosen
and show that Rosen's theorem is a consequence of the one given by Arrow and Debreu.
In Section \ref{variational approach}, we show the existence of generalized Nash equilibria for the RGNEP with discontinuous functions and the equivalence between some existence results. Furthermore, we also establish sufficient and necessary conditions for a point to be a generalized Nash equilibrium.
Finally, in Section \ref{coerciveness}, we introduce some coerciveness conditions and obtain existence results for the RGNEP.

\section{Definitions, notations and preliminary results} \label{preliminaries}
We first recall the notion of convexity and generalized convexity. A real-valued function $f:\R^n\to\R$ is said to be:
\begin{itemize}
\item \emph{convex} if, for all $x,y\in\R^n$ and all $t\in[0,1]$
\[
f(tx+(1-t)y)\leq tf(x)+(1-t)f(y);
\]
\item \emph{quasi-convex} if, for all $x,y\in\R^n$ and all $t\in[0,1]$ 
\[
f(tx+(1-t)y)\leq \max\{f(x),f(y)\};
\]
\item \emph{semi strictly quasi-convex} if, it is quasi-convex and moreover for all $x,y\in\R^n$ such that
$f(x)<f(y)$, we have for all $t\in]0,1[$
\[
f(tx+(1-t)y)<f(y);
\]
\item \emph{pseudo-convex} if, it is differentiable and the following implication holds
\[
\langle \nabla f(x),y-x\rangle\geq0~\Rightarrow~f(y)\geq f(x).
\]
\end{itemize}
Any convex function is semi-strictly quasi-convex, which in turn is quasi-convex.
It is also clear that any convex 
and differentiable function is pseudo-convex, and this is quasi-convex. 

\,

We also recall the notion of pseudo-continuity for functions.
A real-valued function $f:\R^n\to\R$ is said to be:
\begin{itemize}
\item \emph{upper pseudo-continuous} if, for any $x,y\in X$  such that $f(x)<f(y)$, there exists a neighbourhood $V_x$ of $x$ satisfying
\[
f(x')<f(y),\mbox{ for all }x'\in V_x.
\]
\item \emph{lower pseudo-continuous} if, $-f$ is upper pseudo-continuous;
\item \emph{pseudo-continuous} if, it is lower and upper pseudo-continuous.
\end{itemize}
It is important to notice that any upper semi-continuous function is upper pseudo-continuous, but the converse is not true in general, see \cite{Co21} and its references for more details on pseudo-continuity.  

The following result is Theorem 3.2 in \cite{Scalzo}.
\begin{proposition}[Scalzo]\label{pseudo-CC}
Let $X$ be a connected topological space and $f:X\to\R$ be a function. Then $f$ is pseudo-continuous if, and only if, there exists a continuous function $u:X\to\R$ and an increasing function $h:u(X)\to\R$ such that
\[
f=h\circ u.
\]
\end{proposition}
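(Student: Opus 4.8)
\textit{Proof proposal.} The plan is to read ``increasing'' in the strict sense: a merely non-decreasing $h$ does not suffice (with $X=\R$, $u=\mathrm{id}$ and $h$ the jump $h(t)=0$ for $t<0$, $h(t)=1$ for $t\ge 0$, the composite $f=h\circ u$ fails lower pseudo-continuity at $0$). First I would reformulate the hypothesis: $f$ is upper pseudo-continuous if and only if $\{x\in X: f(x)<f(y)\}$ is open for every $y$, and lower pseudo-continuous if and only if $\{x\in X: f(x)>f(y)\}$ is open for every $y$; hence $f$ is pseudo-continuous exactly when the complete preorder $\precsim$ given by $x\precsim x'\iff f(x)\le f(x')$ has closed lower and upper contour sets at every point, i.e.\ is continuous in Debreu's sense.

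The implication ``$\Leftarrow$'' is then short. Assume $f=h\circ u$ with $u$ continuous and $h$ strictly increasing on $u(X)$. If $f(x)<f(y)$, then $h(u(x))<h(u(y))$, so strict monotonicity forces $u(x)<u(y)$; choosing $c\in\R$ with $u(x)<c<u(y)$, the set $V_x:=u^{-1}\big((-\infty,c)\big)$ is an open neighbourhood of $x$, and for every $x'\in V_x$ one has $u(x')<c<u(y)$, whence $f(x')=h(u(x'))<h(u(y))=f(y)$; this is upper pseudo-continuity, and the mirror argument with $V_x:=u^{-1}\big((c,\infty)\big)$ gives lower pseudo-continuity.

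For ``$\Rightarrow$'' --- the substantial direction --- if $f$ is constant I would take $u=f$ and $h=\mathrm{id}$. Otherwise $f(X)\subseteq\R$ has no jumps: if there were $a<b$ in $f(X)$ with $f(X)\cap(a,b)=\emptyset$, then by the reformulation $\{f\le a\}=\{f<b\}$ and $\{f\ge b\}=\{f>a\}$ would split the connected space $X$ into two nonempty open sets. Thus $f(X)$ is dense in itself. The preorder $\precsim$ is continuous, and it possesses a countable order-dense subset: picking a countable $D\subseteq f(X)$ dense in $f(X)$ and some $x_d\in f^{-1}(d)$ for each $d\in D$, for any $x,y$ with $f(x)<f(y)$ density of $f(X)$ in itself gives $s\in f(X)$ with $f(x)<s<f(y)$ and then density of $D$ gives $d\in D$ with $f(x)<d<f(y)$, so $x\precsim x_d\precsim y$. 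I would then apply the Eilenberg--Debreu continuous representation theorem --- or, concretely, observe that the order-quotient $X/{\sim}$ with its order topology is a continuous image of $X$ (the quotient map being continuous precisely because the preimages of the order-subbasic open sets are the sets $\{f<f(y)\}$ and $\{f>f(y)\}$), hence a connected separable linearly ordered topological space, therefore order-isomorphic and homeomorphic to an interval of $\R$ --- to obtain a continuous $u:X\to\R$ with $f(x)\le f(y)\iff u(x)\le u(y)$ for all $x,y$. Finally I would set $h(t):=f(x)$ whenever $u(x)=t$; this is well defined because $u(x)=u(x')$ forces $f(x)\le f(x')\le f(x)$, it is strictly increasing because $u(x)<u(x')$ forces both $f(x)\le f(x')$ and $f(x')\not\le f(x)$, and $f=h\circ u$ by construction.

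The step I expect to be the main obstacle is the production of a \emph{continuous} $u$: one cannot take $u=f$ nor merely relabel the values of $f$, because the subspace topology that $f(X)$ inherits from $\R$ need not coincide with the order topology it carries as the quotient $X/{\sim}$ --- the ``gaps'' of $f(X)$ have to be closed up so that $u$ becomes continuous, and arranging this is exactly the content of the Eilenberg--Debreu representation result (equivalently, of the fact that a connected separable linearly ordered topological space embeds homeomorphically as an interval of $\R$). Once that continuous representation is in hand, all remaining verifications are routine.
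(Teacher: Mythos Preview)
The paper does not supply its own proof of this proposition: it is quoted as Theorem~3.2 of Scalzo~\cite{Scalzo} and used as a black box, so there is nothing in the paper to compare your argument against. What can be said is that your proof is correct and self-contained.

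Your reformulation of pseudo-continuity as openness of the sets $\{f<f(y)\}$ and $\{f>f(y)\}$ is the right starting point, and the observation that ``increasing'' must be read strictly (with the jump counterexample) is a useful clarification the paper leaves implicit. In the forward direction, the connectedness step ruling out $f(X)\cap(a,b)=\emptyset$ for $a<b$ in $f(X)$ is exactly what produces a countable order-dense family for the preorder $x\precsim x'\iff f(x)\le f(x')$, and your route through the order-quotient $X/{\sim}$ equipped with its \emph{order} topology --- showing the quotient map is continuous because the subbasic preimages are precisely the pseudo-continuity sets, and then invoking the Eilenberg--Debreu characterisation of connected separable linearly ordered spaces as real intervals --- is the standard and correct way to manufacture the continuous representative $u$. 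Your final paragraph correctly isolates the only genuinely nontrivial point: one cannot take $u=f$, because $f(X)$ with the subspace topology from $\R$ may differ from $X/{\sim}$ with the order topology, and it is the latter that is connected. The definition of $h$ and the verification that it is strictly increasing are routine once $u$ is in hand.
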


Associated to a real-valued function $f:\R^n\to\R$ and $x\in \R^n$ we consider the following sets
\[
S^<_f(x):=\{y\in\R^n:~f(y)<f(x)\}\mbox{ and }S_f(x):=\{y\in\R^n:~f(y)\leq f(x)\}.
\]
These sets are called the strict lower level set and the lower level set of $f$ at $x$, respectively. Additionally, we also consider the \emph{adjusted level set} of $f$ at $x$
\[
S_f^a(x):=\left\lbrace \begin{matrix}
S_f(x)\cap \overline{B}(S^<_f(x),\rho_x),&x\notin \argmin f\\
S_f(x),&\mbox{otherwise}
\end{matrix}\right.
\] 
where $\rho_x:=\dist(x,S^<_f(x))$. 
The adjusted level set was introduced by Aussel and Hadjisavvas in \cite{AuHad2005}. They characterized the quasi-convexity utilizing the convexity of its adjusted level sets.

It is known that a real-valued function $f:\R^n\to\R$ is quasi-convex (resp. lower pseudo-continuous) if, and only if,  $S_f(x)$ is convex (resp. closed), for all $x\in \R^n$. 
To know more about quasi-convex functions and quasi-convex optimization, we suggest to see \cite{Aussel-book}.

\,

For each of the levels we consider their respective associated normal cone, that is:

\[
N^s(x):=\left\lbrace
\begin{matrix}
\{x^*\in\R^n:~ \langle x^*,y-x\rangle\leq0,\mbox{ for all }y\in S^<_f(x)\},& S^<_f(x)\neq\emptyset\\
\R^n,&\mbox{otherwise}
\end{matrix}
\right.
\]
\[
N(x):=\{x^*\in\R^n:~ \langle x^*,y-x\rangle\leq0,\mbox{ for all }y\in S_f(x)\}\mbox{ and }
\]
\[
N^a(x):=\{x^*\in\R^n:~ \langle x^*,y-x\rangle\leq0,\mbox{ for all }y\in S^a_f(x)\}.
\]
Since $S_f^<(x)\subset S_f^a(x)\subset S_f(x)$ we have $N(x)\subset N^a(x)\subset N^s(x)$. Moreover, the sets $N(x), N^a(x)$ and $N^s(x)$ are convex cones, closed and non-empty. Interesting properties related to the adjusted normal cone were proved in \cite{AuHad2005,BueCot16}. 

\,

The following result establishes a necessary condition to guarantee that a point is a minimizer of a real-valued function. This result is inspired by Lemma 2.1 in \cite{Cavazzuti} and from a remark given in \cite{BueCot16} on the adjusted normal cone.

\begin{proposition}\label{A}
Let $X$ be a non-empty subset of $\R^n$ and $f:\R^n\to\R$ be a function.
 If $\hat{x}\in\argmin_X f$, then
\begin{align}\label{A1}
\langle y^*,y-\hat{x}\rangle\geq0 \mbox{ for all }y\in X\mbox{ and all }y^*\in N(y).
\end{align}
\end{proposition}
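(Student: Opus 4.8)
The plan is to prove \eqref{A1} directly from the definitions, with no appeal to convexity, continuity, or any structural property of $X$. The single idea is that the normal cone $N(y)$ is, by construction, the set of vectors making a non-positive inner product with every vector pointing from $y$ into the lower level set $S_f(y)$, and a point that minimizes $f$ over $X$ necessarily sits inside that level set whenever $y\in X$.

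Concretely, I would fix $y\in X$ and $y^*\in N(y)$. Since $\hat{x}\in\argmin_X f$ and $y\in X$, we have $f(\hat{x})\le f(y)$, which is exactly the statement $\hat{x}\in S_f(y)$. Invoking the definition of $N(y)$ with the admissible point $z=\hat{x}\in S_f(y)$ then yields $\langle y^*,\hat{x}-y\rangle\le 0$, that is, $\langle y^*,y-\hat{x}\rangle\ge 0$. As $y\in X$ and $y^*\in N(y)$ were arbitrary, this is precisely \eqref{A1}.

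I do not expect any genuine obstacle; the only things to verify are bookkeeping points. It is worth noting that $S_f(y)$ always contains $y$, hence is non-empty, so $N(y)$ is the genuine polar-type cone rather than the trivial $\R^n$ alternative (that alternative is only relevant for $N^s$ when $S^<_f(y)=\emptyset$), and $0\in N(y)$ makes the asserted inequality trivially hold for the zero functional, consistent with the claim. I would also stress in the write-up that the inclusion $\hat{x}\in S_f(y)$ is the sole place where the hypothesis $\hat{x}\in\argmin_X f$ is used, which is exactly why this elementary lemma will later serve as the bridge between generalized Nash equilibrium conditions and the variational inequalities built from the normal cones $N$, $N^a$, $N^s$.
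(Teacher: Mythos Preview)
Your argument is correct and is essentially identical to the paper's own proof: both observe that $\hat{x}\in\argmin_X f$ gives $\hat{x}\in S_f(y)$ for every $y\in X$, and then apply the defining inequality of $N(y)$ at the point $\hat{x}$. The additional remarks you make about non-emptiness of $S_f(y)$ and the role of the lemma are accurate but extraneous to the proof itself.
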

\begin{proof}
Since $\hat{x}$ is a minimizer of $f$ on $X$, that means $\hat{x}\in S_f(y)$ for all $y\in X$, and consequently  $\langle y^*,\hat{x}-y\rangle\leq 0$ for all $y^*\in N(y)$. The result follows.
\end{proof}
It is important to notice that in the previous result, we do not require any assumptions of $f$ nor of $X$. On the other hand, the converse of the previous result is not true in general as we can see in the following example.
\begin{example}\label{Example1}
Consider $X=[-1,1]$ and $f:\R\to\R$ defined as 
\[
f(x)=\left\lbrace\begin{matrix}
1,&x\leq0\\
0,&x>0
\end{matrix}\right.
\]
It is not difficult to show that $N(x)=\{0\}$, for all $x\in \R$. Thus, the inequality \eqref{A1} holds for $x=-1$, but this point is not a minimizer of $f$.
\end{example}

We now establish the converse of Proposition \ref{A} in terms of $N^s$. In other words, we establish a sufficient condition to guarantee that a point is a minimizer of a real-valued function.
This result was inspired by Lemma 2.2 in \cite{Cavazzuti}.
\begin{proposition}\label{B}
Let $X$ be a convex and non-empty subset of $\R^n$ and $f:\R^n\to\R$ be a continuous and quasi-convex function. If $\hat{x}\in X$ satisfies
\begin{align}\label{B1}
\langle y^*,y-\hat{x}\rangle\geq0 \mbox{ for all }y\in X\mbox{ and all }y^*\in N^s(y);
\end{align}
then $x\in\argmin_X f$.
\end{proposition}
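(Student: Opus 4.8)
The plan is to argue by contradiction. Suppose $\hat{x}\notin\argmin_X f$; then there is $y_0\in X$ with $f(y_0)<f(\hat{x})$. Since $X$ is convex, the whole segment $z_t:=(1-t)\hat{x}+ty_0$, $t\in[0,1]$, lies in $X$, and I would study the one–variable function $\phi(t):=f(z_t)$, which is continuous because $f$ is, and which satisfies $\phi(0)=f(\hat{x})>f(y_0)=\phi(1)$.

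The first key step is the observation that $\phi$ cannot be non-decreasing on $]0,1]$: otherwise continuity at $0$ would give $\phi(0)=\lim_{t\to0^+}\phi(t)\leq\phi(1)$, contradicting $\phi(0)>\phi(1)$. Hence one can fix $0<s<s'\leq1$ with $\phi(s)>\phi(s')$, that is, $f(z_{s'})<f(z_s)$.

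Next I would work with the strict lower level set $C:=S^<_f(z_s)$. By continuity of $f$ it is open, by quasi-convexity of $f$ it is convex, it is non-empty because $z_{s'}\in C$, and it does not contain $z_s$. A separation argument for the point $z_s$ and the open convex set $C$ then produces a nonzero vector $p$ with $\langle p,w-z_s\rangle<0$ for all $w\in C$, which by definition means $p\in N^s(z_s)$. Plugging $w=z_{s'}\in C$ into this inequality and using $z_{s'}-z_s=(s'-s)(y_0-\hat{x})$ together with $z_s-\hat{x}=s(y_0-\hat{x})$ (recall $s>0$ and $s'>s$) gives
\[
\langle p,z_s-\hat{x}\rangle=\frac{s}{s'-s}\langle p,z_{s'}-z_s\rangle<0.
\]
Taking $y:=z_s\in X$ and $y^*:=p\in N^s(y)$, this contradicts \eqref{B1}, and therefore $\hat{x}\in\argmin_X f$.

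The main obstacle, and essentially the only non-routine point, is that hypothesis \eqref{B1} carries no information at $y=\hat{x}$, so the argument depends entirely on producing an \emph{interior} segment point $z_s$ (with $s>0$) at which the strict lower level set is non-trivial and ``lies ahead'' in the direction $y_0-\hat{x}$; this is exactly what the monotonicity observation on $\phi$ delivers, and it is where continuity of $f$ (to make $C$ open) and quasi-convexity of $f$ (to make $C$ convex) are both genuinely used. A minor point worth checking is the degenerate case in which $z_s$ is a global minimizer of $f$, where $N^s(z_s)=\R^n$; this is automatically ruled out here since $z_{s'}\in S^<_f(z_s)$ forces $S^<_f(z_s)\neq\emptyset$.
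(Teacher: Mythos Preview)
Your argument is correct and follows essentially the same route as the paper: produce an interior point $z_s$ of the segment $[\hat{x},y_0]$ whose strict sublevel set is a non-empty open convex set containing a point farther along the segment, separate, and use collinearity to contradict \eqref{B1}. The only difference is a small detour: your ``$\phi$ is not non-decreasing on $]0,1]$'' step can be replaced by a direct application of the intermediate value theorem to pick $s\in]0,1[$ with $f(y_0)<f(z_s)<f(\hat{x})$ and then take $s'=1$, which is exactly what the paper does.
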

\begin{proof}
Suppose that $\hat{x}$ is not a minimizer of $f$ on $X$, that means there exists $y\in X$ such that $f(y)<f(\hat{x})$. By quasi-convexity and continuity there exists $t\in]0,1[$ such that
\[
f(y)<f(z)<f(\hat{x}),
\]
where $z=t\hat{x}+(1-t)y\in X$. Thus, $S_f^<(z)$ is convex and open, and this implies there exists $z^*\in N^s(z)$ such that $\langle z^*,y-z\rangle<0$, due to a separation theorem. Since $y-z=\dfrac{t}{1-t}(z-\hat{x})$ we obtain $\langle z^*,z-\hat{x}\rangle<0$, which is a contradiction. Hence, $\hat{x}$ is a minimizer of $f$ on $X$.
\end{proof}
The next example shows the converse implication of the previous result does not hold.
\begin{example}
Consider $X$ and $f$ given in Example \ref{Example1}. We can verify that 
\[
N^s(x)=\left\lbrace\begin{matrix}
[0,+\infty[,&x\leq 0\\
\R,&x>0
\end{matrix}\right.
\]
Furthermore, $x=1$ is a minimizer of $f$, but it does not verify inequality \eqref{B1}.
\end{example}

In the next result, we present necessary and sufficient conditions to guarantee a point to be a minimizer of a function using the adjusted normal cone.

\begin{proposition}\label{C}
Let $X$ be a non-empty subset of $\R^n$ and $f:\R^n\to\R$ be a function.
 If $\hat{x}\in\argmin_X f$, then
\begin{align}\label{A1}
\langle y^*,y-\hat{x}\rangle\geq0 \mbox{ for all }y\in X\mbox{ and all }y^*\in N^a(y).
\end{align}
The converse holds provided that $X$ is convex and $f$ is continuous and quasi-convex.
\end{proposition}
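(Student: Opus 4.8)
The plan is to handle the two implications separately: the direct one by reducing \eqref{A1} to a sublevel-set membership, and the converse by a supporting-hyperplane argument in the spirit of Proposition~\ref{B}, but carried out on the \emph{adjusted} sublevel set.

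For the direct implication I would prove that $\hat x\in S_f^a(y)$ for every $y\in X$; granting this, \eqref{A1} is immediate, since $y^*\in N^a(y)$ means $\langle y^*,w-y\rangle\le0$ for all $w\in S_f^a(y)$, and the choice $w=\hat x$ gives $\langle y^*,y-\hat x\rangle\ge0$. To establish the membership, fix $y\in X$. As $\hat x$ minimizes $f$ over $X\ni y$, we have $f(\hat x)\le f(y)$, i.e.\ $\hat x\in S_f(y)$; if $y\in\argmin f$ this finishes the case because then $S_f^a(y)=S_f(y)$. If $y\notin\argmin f$ and $f(\hat x)<f(y)$, then $\hat x\in S_f^<(y)\subseteq S_f^a(y)$, done again. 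The only remaining configuration, $y\notin\argmin f$ with $f(\hat x)=f(y)$, is the one I expect to be the real obstacle: now $\hat x$ and $y$ are both minimizers of $f$ over $X$ with the same value, and one still needs $\dist(\hat x,S_f^<(y))\le\rho_y=\dist(y,S_f^<(y))$. I would attack this by working along the segment $[\hat x,y]\subseteq X$ --- on which $f$ is constantly $\min_X f$, by quasi-convexity --- and exploiting continuity and quasi-convexity to transport from $y$ to $\hat x$ an almost-optimal approach to $S_f^<(y)$; this is where I anticipate the hypotheses ``$X$ convex, $f$ continuous and quasi-convex'' are genuinely used.

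For the converse, assume \eqref{A1} holds for all $y\in X$ and all $y^*\in N^a(y)$, but suppose $\hat x\notin\argmin_X f$. Choose $y\in X$ with $f(y)<f(\hat x)$; by continuity and quasi-convexity of $f$ along $[\hat x,y]\subseteq X$ there is $t\in\,]0,1[$ with $f(y)<f(z)<f(\hat x)$, where $z:=t\hat x+(1-t)y\in X$ (this is exactly as in the proof of Proposition~\ref{B}). Since $f(z)>f(y)$, we have $y\in S_f^<(z)\neq\emptyset$, hence $z\notin\argmin f$ and $S_f^a(z)=S_f(z)\cap\overline B(S_f^<(z),\rho_z)$. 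This set is convex by the Aussel--Hadjisavvas characterization of quasi-convexity \cite{AuHad2005}, it is a proper subset of $\R^n$ because $f(z)<f(\hat x)$ gives $\hat x\notin S_f(z)\supseteq S_f^a(z)$, and it has $z$ on its boundary (as $z$ is at distance exactly $\rho_z$ from $S_f^<(z)$); hence it admits a supporting functional at $z$, i.e.\ there is $z^*\in N^a(z)\setminus\{0\}$ with $\langle z^*,w-z\rangle\le0$ for all $w\in S_f^a(z)$. Because $f$ is continuous, $S_f^<(z)$ is open, so $y\in S_f^<(z)$ lies in the interior of $S_f^a(z)$ and therefore $\langle z^*,y-z\rangle<0$ strictly. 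Now $z-\hat x=(1-t)(y-\hat x)$ while $y-z=t(y-\hat x)$, so $\langle z^*,z-\hat x\rangle$ has the same (negative) sign as $\langle z^*,y-z\rangle$; since $z\in X$ and $z^*\in N^a(z)$, this contradicts \eqref{A1}, and we conclude $\hat x\in\argmin_X f$. The point that makes this \emph{not} a verbatim copy of Proposition~\ref{B} --- and the place where the argument is most fragile --- is that the separating functional must be produced inside $N^a(z)$, not merely inside $N^s(z)$; this is why one supports the convex set $S_f^a(z)$ at its boundary point $z$ rather than simply separating $z$ from the open set $S_f^<(z)$. Overall, I expect the borderline case of the direct implication to be the genuinely hard step.
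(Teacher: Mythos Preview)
Your converse argument is correct and, in fact, more careful than the paper's one-line ``same steps as in Propositions~\ref{A} and~\ref{B}'': Proposition~\ref{B} produces a separating functional in $N^s(z)$, whereas here one needs $z^*\in N^a(z)\subseteq N^s(z)$, and the two cones can differ strictly. Your device of supporting the convex set $S_f^a(z)$ at its boundary point $z$ --- rather than merely separating $z$ from the open set $S_f^<(z)$ --- is exactly what lands the functional in $N^a(z)$, and the remaining steps (strict inequality at the interior point $y\in S_f^<(z)\subset\INT S_f^a(z)$, then the collinearity $y-z=\frac{t}{1-t}(z-\hat x)$) go through as you wrote.

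The forward implication, however, cannot be completed: the case you flag as ``the real obstacle'' is a genuine obstruction, and the statement is false even under the extra hypotheses. Take $f:\R^2\to\R$, $f(x)=g(\|x\|)$ with $g(r)=r-1$ on $[0,1]$, $g\equiv0$ on $[1,2]$, $g(r)=r-2$ for $r\ge2$; this $f$ is continuous and quasi-convex. Let $X=[\tfrac32,2]\times\{0\}$ (convex, compact), on which $f\equiv0$, so every point is a minimizer; take $\hat x=(2,0)$. For $y=(\tfrac32,0)\in X$ one computes $S_f^<(y)=\{\|x\|<1\}$, $\rho_y=\tfrac12$, $S_f^a(y)=\{\|x\|\le\tfrac32\}$, and $y^*=(1,0)\in N^a(y)$; yet $\langle y^*,y-\hat x\rangle=-\tfrac12<0$. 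Thus the analogue of the key step in Proposition~\ref{A} --- namely $\hat x\in S_f^a(y)$ --- fails here, and your proposed transport along $[\hat x,y]$ would require $\dist(\hat x,S_f^<(y))\le\dist(y,S_f^<(y))$, i.e.\ $1\le\tfrac12$. The paper's ``same steps'' do not carry over for the forward direction; only the converse is actually established.
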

\begin{proof}
It follows from the same steps given in the proof of Propositions \ref{A} and \ref{B}.
\end{proof}

\begin{remark}
Propositions \ref{B} and \ref{C} are also true under pseudo-continuity, due to Proposition \ref{pseudo-CC}.  Indeed, if $f$ is pseudo-continuous then there exists an increasing function $h$ and a continuous function $g$ such that $f=h\circ g$. This implies that any level set of $f$ coincides with the level set of $g$. Consequently, they have the same set of minimizers and the same normal cones. Finally, we apply Propositions \ref{B} and \ref{C} to the function $g$, and the affirmation follows.
\end{remark}

We now recall continuity notions for set-valued maps. 

\,

Let $U, V$ be non-empty sets. A \emph{set-valued map} $T: U\tos V$ is an application $T:U\to \mathcal{P}(V)$, that is, for $u\in U$, $T(u)\subset V$. 
The \emph{graph} of $T$ is defined as
\[\gra(T):=\big\{(u,v)\in U\times V\::\: v\in T(u)\big\}.\]
Let $T: X\tos Y$ be a correspondence with $X$ and $Y$ two topological spaces.
The map $T$ is said  to be:
\begin{itemize}
 \item \emph{closed}, when $\gra(T)$ is a closed subset of $X\times Y$;
 \item \emph{lower semicontinuous} when for all $x_0\in X$ and any sequence $(x_n)_{n\in\N}$ converging to $x_0$ and any element $y_0$ of $T(x_0)$, there exists a sequence $(y_n)_{n\in\N}$ converging to $y_0$ such that $y_n\in T(x_n)$, for any $n\in\N$.
 \item \emph{upper semicontinuous} when for all $x\in X$ and any open set $V$, with $T(x)\subset V$,  there exists a neighbourhood $\mathscr{V}_x$ of $x$ such that $T(\mathscr{V}_x) \subset V$;
 \item \emph{continuous} when it is upper and lower semicontinuous.
 \end{itemize}
 
 We finish this section with the following result.
 
\begin{proposition}\label{convex-lower}
Let $T:[a,b]\tos \R^m$ be a set-valued map with non-empty values. If $\gra(T)$ is convex and $T(\{a,b\})$ is bounded, then it is lower semicontinuous.
\end{proposition}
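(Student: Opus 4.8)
The plan is to use the convexity of $\gra(T)$ to build, near any prescribed point, a selection of $T$ that is continuous at that point, by ``aiming'' from $(x_0,y_0)$ at the two endpoints $a$ and $b$. First I would dispose of the trivial case $a=b$, in which $[a,b]$ is a singleton and lower semicontinuity holds vacuously; so assume $a<b$. Following the sequential definition of lower semicontinuity given above, fix $x_0\in[a,b]$, an element $y_0\in T(x_0)$, and a sequence $(x_n)$ in $[a,b]$ with $x_n\to x_0$; the task is to exhibit $y_n\in T(x_n)$ for every $n$ with $y_n\to y_0$.

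Since $T$ has non-empty values, fix $w\in T(a)$ and $z\in T(b)$, so that $(a,w),(b,z)\in\gra(T)$. Because $(x_0,y_0)\in\gra(T)$ as well and $\gra(T)$ is convex, the two segments $[(x_0,y_0),(b,z)]$ and $[(x_0,y_0),(a,w)]$ are entirely contained in $\gra(T)$. I would then define $y_n$ according to the position of $x_n$: if $x_n=x_0$, put $y_n=y_0$; if $x_n>x_0$ (so necessarily $x_0<b$), set $t_n=\frac{x_n-x_0}{b-x_0}\in(0,1]$ and $y_n=(1-t_n)y_0+t_nz$; if $x_n<x_0$, set $t_n=\frac{x_0-x_n}{x_0-a}\in(0,1]$ and $y_n=(1-t_n)y_0+t_nw$. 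In each case $(x_n,y_n)$ is precisely the point of the relevant segment whose first coordinate equals $x_n$, hence $(x_n,y_n)\in\gra(T)$, i.e.\ $y_n\in T(x_n)$. (If $x_0=a$ then every $x_n\ge x_0$ and only the second rule ever applies; symmetrically if $x_0=b$, so the degenerate endpoint cases are covered automatically.)

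For convergence, observe that in all three cases $y_n$ lies on the segment from $y_0$ to $z$, or from $y_0$ to $w$, at parameter $t_n$ (or equals $y_0$), while $t_n\to 0$ because $x_n\to x_0$ and $b-x_0$, $x_0-a$ are fixed positive numbers; hence $y_n\to y_0$. This shows $T$ is lower semicontinuous.

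The construction itself is essentially the whole content of the argument, so there is no serious obstacle; the two points requiring a little care are the case distinction on the sign of $x_n-x_0$ (one cannot rely on a single endpoint simultaneously for the indices with $x_n<x_0$ and those with $x_n>x_0$) and the degenerate situations $x_0\in\{a,b\}$. It is also worth remarking that only the non-emptiness of $T(a)$ and $T(b)$ is used in this argument, not the boundedness of $T(\{a,b\})$ assumed in the statement.
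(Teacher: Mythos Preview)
Your proof is correct and follows essentially the same idea as the paper's: express each $x_n$ as a convex combination of $x_0$ with one of the endpoints $a$ or $b$, then use the convexity of $\gra(T)$ to obtain a corresponding $y_n\in T(x_n)$ that converges to $y_0$ as the combination parameter goes to its limit. The one substantive difference is that you fix $w\in T(a)$ and $z\in T(b)$ once and for all, whereas the paper selects a fresh $c_n^*\in T(c_n)$ at each step; in the paper's version the sequence $(c_n^*)$ could a priori be unbounded, and the hypothesis that $T(\{a,b\})$ is bounded is invoked precisely to ensure $(1-t_n)c_n^*\to 0$. Your more careful selection makes this hypothesis unnecessary, so your closing remark is accurate: only the non-emptiness of $T(a)$ and $T(b)$ is actually required.
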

\begin{proof}
Let $x\in [a,b]$, $(x_n)_{n\in\N}$ be a sequence converging to $x$ and $x^*\in T(x)$. For each $n\in\N$, there exist $t_n\in[0,1]$ and $c_n\in\{a,b\}\setminus\{x\}$ such that $x_n=t_nx+(1-t_n)c_n$. Since $T$ has non-empty values, we take $c_n^*\in T(c_n)$. By convexity of  $\gra(T)$, one has $x_n^*=t_nx^*+(1-t_n)c_n^*\in T(x_n)$. It is not difficult to show that the sequence $(t_n)$ converges to $ 1$. Thus, the sequence $(x_n^*)_{n\in\N}$ converges to $x^*$. Therefore, $T$ is lower semicontinuous. 
\end{proof} 


\section{The generalized Nash game proposed by Rosen}\label{motivation}
Let $N$ be the set of players which is any finite and non-empty set.
Let us assume that each player $\nu \in N$ chooses a strategy $x^\nu $ in a strategy set $K_{\nu}$, which is a subset of  $\R^{n_\nu}$. We denote by $\R^n$, $K$ and $K_{-\nu}$ the Cartesian products of $\prod_{\nu\in N}\R^{n_\nu}$,  $\prod_{\nu\in N} K_{\nu}$ and $\prod_{\mu\in N\setminus\{\nu\}} K_{\mu}$, respectively. We can write $x = (x^\nu, x^{-\nu}) \in K$ in order to emphasize the strategy of player $\nu$, $x^\nu\in K_\nu$, and the strategy of the other players $x^{-\nu}\in K_{-\nu}$.

\,

Given the strategy of all players except for player $\nu$, $x^{-\nu}$,  player $\nu$ chooses a strategy $x^\nu$ such that it solves the following optimization problem  
\begin{align}\label{NEP}
\min_{x^\nu} \theta_\nu(x^\nu,x^{-\nu}),~\mbox{ subject to }~x^\nu\in K_\nu,
\end{align}
where $\theta_\nu: \R^n\to\R$ is a real-valued function and  $\theta_\nu(x^\nu,x^{-\nu})$ denotes the loss player $\nu$ suffers when the rival players have chosen 
the strategy $x^{-\nu}$.  Thus, a  {\em Nash equilibrium}  is a vector $\hat{x}$ such that $\hat{x}^\nu$ solves \eqref{NEP} when the rival players take the strategy $\hat{x}^{-\nu}$, for any $\nu$. 
We denote by $\nep(\{\theta_\nu,K_\nu\}_{\nu\in N})$ the set of Nash equilibria. 

\,

The following is a classic result of the existence of Nash equilibria.

\begin{theorem}[Debreu, Glicksberg, Fan]\label{D}
Suppose for each $\nu\in N$, $K_\nu$ is a compact, convex and non-empty set, the objective function $\theta_\nu$ is continuous and quasi-convex concerning its player's variable. Then, the set $\nep(\{\theta_\nu,K_\nu\}_{\nu\in N})$ is non-empty.
\end{theorem}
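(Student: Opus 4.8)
The plan is to realize a Nash equilibrium as a fixed point of the joint best-response correspondence and then invoke Kakutani's fixed point theorem. For each player $\nu\in N$ and each $x=(x^\nu,x^{-\nu})\in K$, set
\[
B_\nu(x):=\argmin_{y^\nu\in K_\nu}\theta_\nu(y^\nu,x^{-\nu}),
\]
and define $B:K\tos K$ by $B(x):=\prod_{\nu\in N}B_\nu(x)$. By construction, $\hat x\in\nep(\{\theta_\nu,K_\nu\}_{\nu\in N})$ if, and only if, $\hat x\in B(\hat x)$, so it suffices to exhibit a fixed point of $B$. Observe first that $K$ is non-empty, convex and compact, being a finite product of such sets, and note that here ``continuous'' is understood as joint continuity of $\theta_\nu$ on $\R^n$.

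Next I would verify the hypotheses of Kakutani's theorem for $B$. For non-emptiness of $B(x)$: with $x^{-\nu}$ fixed, the map $y^\nu\mapsto\theta_\nu(y^\nu,x^{-\nu})$ is continuous on the non-empty compact set $K_\nu$, hence attains its minimum, so $B_\nu(x)\neq\emptyset$. For convexity it is enough that each $B_\nu(x)$ be convex: if $y_1^\nu,y_2^\nu\in B_\nu(x)$ share the minimal value $m$, then for every $t\in[0,1]$ quasi-convexity in the player's variable gives $\theta_\nu(ty_1^\nu+(1-t)y_2^\nu,x^{-\nu})\leq\max\{m,m\}=m$, so the convex combination is again a minimizer. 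For the closed graph of $B$: if $x_k\to x$ in $K$ and $z_k\to z$ with $z_k\in B(x_k)$, then for each $\nu$ and each $y^\nu\in K_\nu$ one has $\theta_\nu(z_k^\nu,x_k^{-\nu})\leq\theta_\nu(y^\nu,x_k^{-\nu})$, and passing to the limit with joint continuity of $\theta_\nu$ yields $\theta_\nu(z^\nu,x^{-\nu})\leq\theta_\nu(y^\nu,x^{-\nu})$, i.e. $z^\nu\in B_\nu(x)$ for every $\nu$, so $z\in B(x)$. Since $K$ is compact, a closed graph is equivalent to upper semicontinuity with closed values.

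With these properties in hand, Kakutani's fixed point theorem applied to $B:K\tos K$ produces $\hat x\in K$ with $\hat x\in B(\hat x)$, which is exactly a Nash equilibrium; hence $\nep(\{\theta_\nu,K_\nu\}_{\nu\in N})\neq\emptyset$. I expect the closed-graph step (equivalently, upper semicontinuity of $B$) to be the only delicate point: it is an instance of Berge's maximum theorem with the constant constraint correspondence $x\mapsto K_\nu$, and it is precisely there that joint continuity of each $\theta_\nu$---rather than mere continuity in the player's own variable---is used; the remaining verifications are routine. Alternatively, the statement can be recovered as a special case of the Arrow--Debreu theorem by viewing the game as a generalized Nash game whose strategy correspondences are the constant maps $x\mapsto K_\nu$, but the Kakutani argument sketched above is self-contained.
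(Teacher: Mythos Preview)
Your proof is correct and is the standard argument: build the joint best-response correspondence, check non-emptiness (Weierstrass), convexity (quasi-convexity in the own variable), and closed graph (joint continuity, or Berge with a constant constraint map), then apply Kakutani. Each step is sound as written.

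There is nothing to compare against in detail, because the paper does not supply a proof of this theorem; it records it as a classical result due to Debreu, Glicksberg and Fan. The only ``proof'' the paper indicates is the arrow in its implications diagram, namely that Theorem~\ref{D} is a special case of the Arrow--Debreu theorem (Theorem~\ref{A-D}) obtained by taking the constraint maps $X_\nu$ to be the constant maps $x^{-\nu}\mapsto K_\nu$. You already mention this route in your last paragraph, so your write-up in fact contains both the self-contained Kakutani argument and the paper's preferred derivation. Either is acceptable; the Kakutani route has the advantage of being independent of Theorem~\ref{A-D}, while the Arrow--Debreu route is a one-line specialization once that theorem is available.
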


In a generalized Nash equilibrium problem, the strategy of each player must belong to a set $X_\nu(x^{-\nu})\subset K_\nu$ that depends on the rival players' strategies. The aim of player $\nu$, given the others players' strategies $x^{-\nu}$, is to choose a strategy $x^\nu$ that solves the next minimization problem
\begin{align}\label{GNEP}
\min_{x^\nu} \theta_\nu(x^\nu,x^{-\nu}),~\mbox{ subject to }~x^\nu\in X_\nu(x^{-\nu}),
\end{align}
where $X_\nu$ is a set-valued map from $K_{-\nu}$ to $K_\nu$. Thus, a vector $\hat{x}$ is a \emph{generalized Nash equilibrium} if, $\hat{x}^\nu$ solves \eqref{GNEP} when the rival players take the strategy $\hat{x}^{-\nu}$, for any $\nu$. 
We denote by $\gnep(\{\theta_\nu,X_\nu\}_{\nu\in N})$ the set of
generalized Nash equilibria. 

\,

The following result is about the existence of generalized Nash equilibria due to Arrow and Debreu \cite{Arrow-Debreu}, but we state it as in \cite{Facchinei2007}.

\begin{theorem}[Arrow and Debreu]\label{A-D}
Suppose for each $\nu\in N$, $K_\nu$ is a compact, convex and non-empty set, the objective function $\theta_\nu$ is continuous and quasi-convex concerning its player's variable, and the constraint map $X_\nu$ is continuous with convex, compact and non-empty values. Then, the set $\gnep(\{\theta_\nu,X_\nu\}_{\nu\in N})$ is non-empty.
\end{theorem}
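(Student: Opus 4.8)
The plan is to realize a generalized Nash equilibrium as a fixed point of the best-response correspondence and to invoke Kakutani's fixed point theorem. For each player $\nu\in N$ define the solution map $S_\nu:K\tos K_\nu$ by
\[
S_\nu(x):=\argmin_{z\in X_\nu(x^{-\nu})}\theta_\nu(z,x^{-\nu}),
\]
and put $S:K\tos K$, $S(x):=\prod_{\nu\in N}S_\nu(x)$. Recalling \eqref{GNEP}, a vector $\hat{x}\in K$ lies in $\gnep(\{\theta_\nu,X_\nu\}_{\nu\in N})$ if, and only if, $\hat{x}^\nu\in S_\nu(\hat{x})$ for every $\nu$, that is, $\hat{x}\in S(\hat{x})$; so it suffices to produce a fixed point of $S$.

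First I would note that $K=\prod_{\nu\in N}K_\nu$ is non-empty, convex and compact, being a finite product of non-empty convex compact sets, so it remains to check that $S$ has non-empty convex values and closed graph. Non-emptiness of $S_\nu(x)$ is Weierstrass' theorem: $X_\nu(x^{-\nu})$ is non-empty and compact and $\theta_\nu(\cdot,x^{-\nu})$ is continuous. Convexity of $S_\nu(x)$ holds because it is the intersection of the convex set $X_\nu(x^{-\nu})$ with the convex lower level set of the quasi-convex function $\theta_\nu(\cdot,x^{-\nu})$ at its minimal value over $X_\nu(x^{-\nu})$. Passing to the product, $S(x)$ is non-empty and convex.

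The main step — and the one I expect to be the real obstacle — is showing that $\gra(S)$ is closed, since this is exactly where the full continuity (both lower and upper semicontinuity) of the constraint maps is needed; it amounts to a maximum-theorem argument carried out coordinatewise. Let $x_k\to x$ in $K$ and $y_k\to y$ with $y_k\in S(x_k)$, and fix $\nu$. Since $y_k^\nu\in X_\nu(x_k^{-\nu})$ and $X_\nu$ is upper semicontinuous with closed values into the compact set $K_\nu$, its graph is closed, so $y^\nu\in X_\nu(x^{-\nu})$. To check optimality, take an arbitrary $z\in X_\nu(x^{-\nu})$; by lower semicontinuity of $X_\nu$ there exist $z_k\in X_\nu(x_k^{-\nu})$ with $z_k\to z$, and from $\theta_\nu(y_k^\nu,x_k^{-\nu})\leq\theta_\nu(z_k,x_k^{-\nu})$, letting $k\to\infty$ and using the continuity of $\theta_\nu$, we obtain $\theta_\nu(y^\nu,x^{-\nu})\leq\theta_\nu(z,x^{-\nu})$. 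As $z$ is arbitrary, $y^\nu\in S_\nu(x)$; since $\nu$ is arbitrary, $y\in S(x)$, which proves that $\gra(S)$ is closed.

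Finally, $S:K\tos K$ has non-empty convex values and closed graph on the non-empty convex compact set $K$, so Kakutani's fixed point theorem provides $\hat{x}\in S(\hat{x})$, and by the first paragraph $\hat{x}\in\gnep(\{\theta_\nu,X_\nu\}_{\nu\in N})$. Hence this set is non-empty. (One could phrase the closed-graph step as an application of Berge's maximum theorem to each $S_\nu$, but spelling out the sequential argument makes transparent exactly which hypotheses on $\theta_\nu$ and $X_\nu$ are used.)
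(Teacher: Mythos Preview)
Your argument is correct and is precisely the classical best-response/Kakutani route to this result. Note, however, that the paper does not supply its own proof of Theorem~\ref{A-D}: it is quoted as a known result due to Arrow and Debreu (restated as in \cite{Facchinei2007}), and the only comment the paper makes on its derivation is the schematic implication ``Kakutani's theorem $\Rightarrow$ Theorem~\ref{A-D}'' in Figure~\ref{Implications 1}. Your proposal is exactly a fleshed-out version of that implication, so there is nothing to contrast; the paper simply takes the theorem as given rather than reproving it.
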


\begin{remark}
The previous results are also true by considering pseudo-continuity instead of continuity, this was proved by Morgan and Scalzo \cite{MORGAN2007}. However, thanks to Proposition \ref{pseudo-CC}, they are equivalent in the sense that we can prove one of them from the other, and this was established in \cite{Co21}.
\end{remark}

An important instance of a generalized Nash equilibrium problem was presented by Rosen in \cite{Rosen}. More specifically, let $X$  be a convex and non-empty subset of $\R^n$. For each $\nu\in N$ we define
\begin{align}\label{R-X}
X_\nu(x^{-\nu}):=\{x^\nu\in\R^{n_\nu}:~(x^\nu,x^{-\nu})\in X\}.
\end{align}
Here $K_\nu$ is the projection of $X$ onto $\R^{n_\nu}$, that is
\begin{align}\label{Projection-X}
K_\nu=\{x^\nu\in\R^{-\nu}:~(x^\nu,x^{-\nu})\in X\mbox{ for some }x^{-\nu}\in\R^{n-n_\nu}\}
\end{align}
It is not difficult to see that in general the sets $X$ and $K$ are different. 
In order to illustrate it we can see Figure \ref{F1}.
\begin{figure}[h!]
\centering
\begin{tikzpicture}[>=latex,scale=1.35]
\draw[->](-0.25,0)--(4,0)node[below right]{$x^1$};
\draw[->](0,-0.25)--(0,3)node[left]{$x^2$};
\draw[fill=gray!30,line width=0.7](1,1.5)--(2,2.5)--(3,1.5)--(2,0.5)--(1,1.5);
\draw(2.35,2.5)node[]{$X$};
\draw[dotted](1,1.5)--(1,0);
\draw[dotted](3,1.5)--(3,0);
\draw[line width=0.7](1,-0.03)--(3,-0.03)node[below right]{$K_1$};
\draw[dotted](0,0.5)--(2,0.5);
\draw[dotted](0,2.5)--(2,2.5);
\draw[line width=0.7](-0.03,0.5)--(-0.03,2.5)node[left]{$K_2$};
\draw[line width=0.7](2.5,1)--(2.5,2);
\draw[dotted](2.5,0)--(2.5,1);
\draw(2.5,0)node[below]{$x^1$};
\draw[<-](2.52,1.6)--(3,2.25)node[right]{$X_2(x^1)$};
\draw[line width=0.7](1.25,1.25)--(2.75,1.25);
\draw[dotted](0,1.25)--(1.25,1.25);
\draw(0,1.25)node[left]{$x^2$};
\draw[<-](2,1.2)--(1,0.7)node[left]{$X_1(x^2)$};
\end{tikzpicture}
\caption{$X\subset\R^2, K_1,K_2, X_1(x^2)$ and $X_2(x^1)$}\label{F1}
\end{figure}
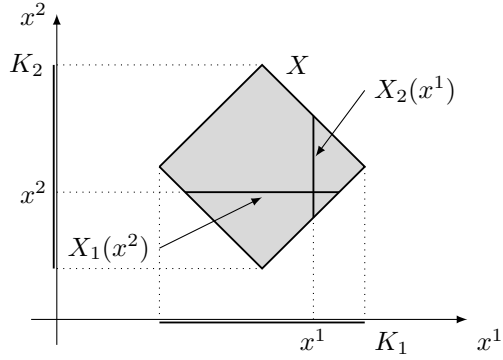

We denote by $\Rgnep(\{\theta_\nu,X\}_{\nu\in N})$ the solution set of this generalized Nash equilibrium problem proposed by Rosen and we state below Rosen's theorem.

\begin{theorem}[Rosen]\label{TR}
Assume that $X$ is a convex, compact and non-empty subset of $\R^n$. 
If for each $\nu\in N$ the objective function $\theta_\nu$ is continuous and convex concerning its player's variable, then the set $\Rgnep(\{\theta_\nu,X\}_{\nu\in N})$ is non-empty.
\end{theorem}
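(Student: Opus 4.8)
The plan is to deduce Rosen's theorem from the classical Nash existence result, Theorem \ref{D} --- which is itself the particular case of the Arrow--Debreu theorem, Theorem \ref{A-D}, in which every constraint map is constant --- by moving the shared constraint out of the feasible sets and into the objective functions of an auxiliary game with only two players. One cannot simply view the RGNEP as a GNEP with the induced constraint maps $X_\nu$ of \eqref{R-X} and quote Theorem \ref{A-D} directly: those maps fail, in general, to be lower semicontinuous at boundary points of $K_{-\nu}$, which is exactly the obstruction recorded in Example \ref{example1}. The two-player reduction sidesteps this, because there both players share one and the same constant strategy set, namely the convex, compact, non-empty set $X$.

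Concretely, I would introduce the Nash game $\nep(\{f_i,X\}_{i\in\{1,2\}})$ with loss functions
\[
f_1(x,y):=\sum_{\nu\in N}\theta_\nu(x^\nu,y^{-\nu}),\qquad f_2(x,y):=\|x-y\|^2,
\]
in which player $1$ minimizes $f_1(\cdot,y)$ over $X$ and player $2$ minimizes $f_2(x,\cdot)$ over $X$. The first step is to verify the hypotheses of Theorem \ref{D}: $f_2$ is continuous and convex in $y$; $f_1$ is continuous, and it is convex in $x$ because each of its summands depends on $x$ only through the block $x^\nu$ and is convex in that block by assumption, so the sum is convex on $\R^n$ (in particular quasi-convex). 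Since $X$ is convex, compact and non-empty, Theorem \ref{D} then yields a Nash equilibrium $(\hat x,\hat y)\in X\times X$.

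The second step is to read off a Rosen equilibrium from $(\hat x,\hat y)$. As $\hat x\in X$, the function $\|\hat x-\cdot\|^2$ attains its minimum over $X$ only at $\hat x$, whence $\hat y=\hat x$; therefore $\hat x$ minimizes $x\mapsto\sum_{\nu\in N}\theta_\nu(x^\nu,\hat x^{-\nu})$ over $X$. Now fix $\mu\in N$ and $y^\mu\in X_\mu(\hat x^{-\mu})$; by \eqref{R-X} the vector $(y^\mu,\hat x^{-\mu})$ lies in $X$, and comparing the values of $f_1(\cdot,\hat x)$ at $(y^\mu,\hat x^{-\mu})$ and at $\hat x$ --- where only the $\mu$-th summand changes, since the second arguments $\hat x^{-\nu}$ are frozen at player $2$'s strategy --- gives $\theta_\mu(\hat x^\mu,\hat x^{-\mu})\le\theta_\mu(y^\mu,\hat x^{-\mu})$. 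As $\mu$ and $y^\mu$ were arbitrary, $\hat x^\mu$ solves \eqref{GNEP} for the map \eqref{R-X} at $\hat x^{-\mu}$ for every player, that is, $\hat x\in\Rgnep(\{\theta_\nu,X\}_{\nu\in N})$.

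The only genuine difficulty is the one already flagged: the constraint maps induced by Rosen's game need not be continuous, so the classical theorems do not apply to it verbatim, and the whole purpose of the construction is to restore continuity by absorbing the coupling into $f_1$. Two smaller points must be kept straight while carrying out the plan. First, the second slot of $f_1$ must carry player $2$'s variable, held fixed in player $1$'s subproblem; this is precisely what makes a single summand move in the comparison above, whereas placing player $1$'s own $x$ there would let the remaining summands change and break the argument. Second, it is convexity --- and not merely quasi-convexity --- of the $\theta_\nu$ that gets used, since a sum over the blocks $x^\nu$ of functions that are only quasi-convex in their respective block need not be quasi-convex; this matches the fact that the quasi-convex version of Rosen's theorem calls for a substantially different treatment.
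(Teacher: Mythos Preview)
Your proposal is correct and follows essentially the same route as the paper: reduce the RGNEP to a two-player NEP on $X\times X$ in which one player minimizes a distance (forcing $\hat x=\hat y$) and the other minimizes the Nikaid\^o--Isoda-type sum $\sum_\nu\theta_\nu$, then apply Theorem~\ref{D} and read off the Rosen equilibrium exactly as in Proposition~\ref{P1}. The only differences are cosmetic---you swap the roles of the two players and use $\|x-y\|^2$ in place of $\|x-y\|$---and your discussion of why convexity (not mere quasi-convexity) is needed for $f_1$ is a welcome elaboration of the same point made in the paper's remarks.
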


Since any convex function is quasi-convex, a natural question arises: is Theorem \ref{TR} a consequence of  Theorem \ref{A-D}?
This question is motivated by the following proposition.
\begin{proposition}\label{mot-Rosen}
Let $X$ be a convex, compact and non-empty subset  of  $\R^2$ and
 $\theta_1,\theta_2$ be two functions defined from $\R^2$ onto $\R$. If each function $\theta_\nu$ is continuous and quasi-convex concerning its player's variable, then the set $\Rgnep(\{\theta_\nu,X\}_{\nu\in \{1,2\}})$ is non-empty.
\end{proposition}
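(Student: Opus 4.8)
The plan is to recognize the Rosen game as an ordinary GNEP whose constraint maps are the sections of $X$, to verify for these maps the hypotheses of the Arrow--Debreu theorem (Theorem~\ref{A-D}), and then to invoke that theorem. Write $N=\{1,2\}$; since $X\subset\R^2$ and both players have a non-trivial strategy, $n_1=n_2=1$ and $X\subset\R\times\R$. By definition, $\Rgnep(\{\theta_\nu,X\}_{\nu\in\{1,2\}})$ is exactly the solution set of $\gnep(\{\theta_\nu,X_\nu\}_{\nu\in\{1,2\}})$ with $X_\nu$ given by \eqref{R-X}, and $K_\nu=\proj_\nu(X)$ is a compact interval because $X$ is convex and compact; in particular $K_{-1}=K_2=\proj_2(X)$ and $K_{-2}=K_1=\proj_1(X)$.

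First I would check that each $X_\nu\colon K_{-\nu}\tos K_\nu$ has non-empty, convex and compact values: if $x^{-\nu}\in K_{-\nu}=\proj_{-\nu}(X)$ then there is some $x^\nu$ with $(x^\nu,x^{-\nu})\in X$, so $X_\nu(x^{-\nu})\neq\emptyset$; and $X_\nu(x^{-\nu})$ is a section of the convex compact set $X$, hence itself convex and compact. Next I would establish the continuity of $X_\nu$. Upper semicontinuity is easy: $\gra(X_\nu)$ is the image of $X$ under the linear coordinate swap, hence convex and compact, hence closed, and a set-valued map with closed graph taking values in a compact set is upper semicontinuous.

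The heart of the argument is the lower semicontinuity of $X_\nu$. Here I would use that $\gra(X_\nu)$ is convex (again a linear image of the convex set $X$), that $X_\nu$ has non-empty values on the compact interval $K_{-\nu}=[a,b]$, and that $X_\nu(\{a,b\})\subset K_\nu$ is bounded; Proposition~\ref{convex-lower} then yields that $X_\nu$ is lower semicontinuous. Thus each $X_\nu$ is continuous with non-empty, convex and compact values, and since each $\theta_\nu$ is continuous and quasi-convex in its player's variable, Theorem~\ref{A-D} applies and gives that $\gnep(\{\theta_\nu,X_\nu\}_{\nu\in\{1,2\}})=\Rgnep(\{\theta_\nu,X\}_{\nu\in\{1,2\}})$ is non-empty.

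I expect the only real obstacle to be the lower semicontinuity step, and it is precisely there that the restriction to $\R^2$ --- equivalently, that each parameter set $K_{-\nu}$ is one-dimensional --- is essential, since Proposition~\ref{convex-lower} is a statement about set-valued maps of one real variable. In higher dimension the sections of a convex compact set need not vary lower semicontinuously with the parameter, so this argument does not extend verbatim, which matches the discussion preceding the statement.
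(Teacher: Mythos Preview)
Your proposal is correct and follows essentially the same route as the paper: identify each $K_\nu$ as a compact interval, note that the section maps $X_\nu$ have non-empty convex compact values and closed graph (hence are upper semicontinuous), obtain lower semicontinuity from Proposition~\ref{convex-lower}, and then invoke Theorem~\ref{A-D}. You have merely filled in more of the routine justifications (closedness of the graph, boundedness of $X_\nu(\{a,b\})$) that the paper leaves implicit.
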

\begin{proof}
It is clear that, for each $\nu\in \{1,2\}$,  the set $K_\nu$  defined as in \eqref{Projection-X} is just a compact interval of $\R$. On the other hand, the map $X_\nu:K_{-\nu}\tos K_\nu$ defined as in \eqref{R-X} is upper semicontinuous with compact, convex and non-empty values. Moreover, it is also lower semicontinuous, due to Proposition \ref{convex-lower}. Thus, the existence of generalized Nash equilibria is guaranteed by Theorem \ref{A-D}.
\end{proof}

Additionally, Ponstein \cite{Ponstein} gives a positive answer to this question when $X$ is a convex polyhedron (see Lemma 2 in \cite{Ponstein}). However, the following example says that Theorem \ref{TR} is not a direct consequence of Theorem \ref{A-D}.

\begin{example}\label{example1}
Consider the sets
\[
A:=[0,e_3]\cup\{(x,y,0)\in\R^3:~x^2+(y-1)^2\leq 1~\wedge~x\geq0\}
\]
and $X:=\co(A)$, the convex hull of $A$, see Figure \ref{set-X}.
\begin{figure}[h!]
\centering
\begin{tikzpicture}[>=latex,scale=1.5]

\draw[<-,gray](-0.75,-0.75)--(0,0)node at (-0.75,-0.75)[below left,black]{$x$};
\draw[->,gray](0,0)--(2.5,0)node[below right,black]{$y$};
\draw[->,gray](0,0)--(0,2)node[left,black]{$z$};
\draw[line width=0.7](0,0)--(0,1);
\draw[line width=0.6](0,0)--(2,0);
\draw[line width=0.7,domain=0:2]plot(\x,{0.35*(\x-1)^2-0.35});
\end{tikzpicture}
\hfill
\begin{tikzpicture}[>=latex,scale=1.5]
\fill[gray!60,domain=0:2](0,1)--plot(\x,{0.35*(\x-1)^2-0.35})--(0,1);
\draw[<-,gray](-0.75,-0.75)--(0,0)node at (-0.75,-0.75)[below left,black]{$x$};
\draw[->,gray](0,0)--(2.5,0)node[below right,black]{$y$};
\draw[->,gray](0,0)--(0,2)node[left,black]{$z$};
\draw[line width=0.7](0,0)--(0,1);
\draw[line width=0.6,dashed](0,0)--(2,0);
\draw[line width=0.7,domain=0:2]plot(\x,{0.35*(\x-1)^2-0.35});
\draw[line width=0.7](0,1)--(2,0);
\draw[line width=0.7](0,1)--(1,-0.35);
\end{tikzpicture}
\caption{The set $A$ and its convex hull $X$}\label{set-X}
\end{figure}
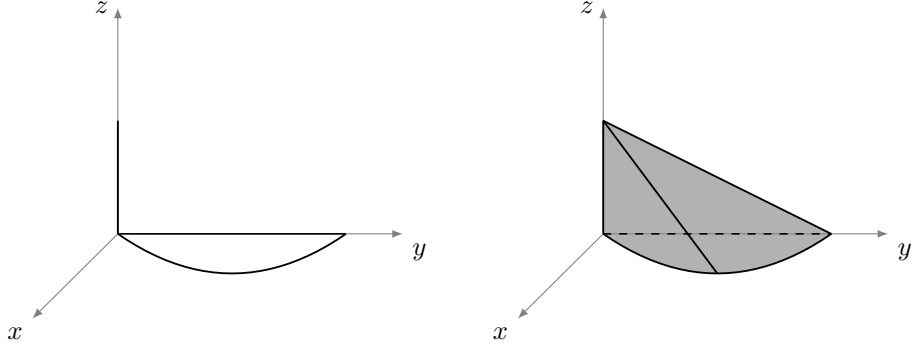

Moreover, consider two continuous functions $\theta_1,\theta_2:\R^3\to\R$ which are convex concerning their player's variable. 

Here, we note that $K_1=\{(x,y)\in\R^2:~x^2+(y-1)^2\leq1~\wedge~x\geq0\}$ and $K_2=[0,1]$. Thus, for each $(x,y)\in K_1$ and $z\in K_2$ we have
\[
X_1(z)=\{(x,y)\in\R^2:~x^2+(y-(1-z))^2\leq (1-z)^2~\wedge~x\geq0\}
\]
and
\[
X_2(x,y)=\left\lbrace\begin{matrix}
[0,1],&(x,y)=(0,0)\\
\left[0,1-\dfrac{x^2+y^2}{2y}\right],&(x,y)\neq(0,0)
\end{matrix}\right.
\]
By Proposition \ref{convex-lower}, the map $X_1$ is lower semicontinuous. However, 
it was showed in \cite{Piotr} that $X_2$ is not lower semicontinuous.
Thus, we cannot directly apply  Theorem \ref{A-D} to guarantee the existence of solutions for the RGNEP. 
\end{example}

It is important to note that  the proof of Theorems \ref{D}, \ref{A-D} and \ref{TR} consists of reformulating the games as a fixed point problem in order to apply the famous  Kakutani's theorem. 
In Figure \ref{Implications 1}, we present the links between these results. 

\begin{figure}[h!]
\centering
\begin{tikzpicture}[>=latex,scale=1]
\draw(-4.5,1)node[]{Kakutani's theorem};
\draw[->](-3,1)--(-2,1);
\draw(-0.25,1)node[left]{Theorem \ref{A-D}};
\draw[->](0,1)--(1,1);
\draw(2,1)node[]{Theorem \ref{D}};
\draw(-4.5,-0.3)node[]{Theorem \ref{TR}};
\draw[->](-4.4,0.7)--(-4.4,-0.1);
\end{tikzpicture}
\caption{Kakutani's  theorem and existence results of (generalized) Nash equilibria.}\label{Implications 1}
\end{figure}
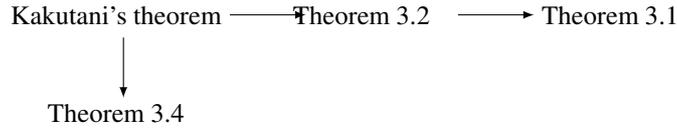

On the other hand, Yu \emph{et al.} \cite{Yu-Fa-Yang2016} showed that the fixed point theorem due to Kakutani is a consequence of Theorem \ref{D}. The authors in \cite{Yu-Fa-Yang2016} reformulated the fixed point problem as a classical Nash game.  Thus, following the chain of implications we deduce that Theorem \ref{TR} follows from Theorem \ref{D}. Hence, the answer to our question is positive. However, 
 we will give a direct proof of Theorem \ref{TR} as a consequence of Theorem \ref{D}. 
First consider the following  functions $f_1,f_2:\R^n\times \R^n\to\R$ defined as
\begin{align}\label{neo-games}
f_1(x,y):=\|x-y\|\mbox{ and }f_2(x,y):=\sum_{\nu\in N}\theta_\nu(y^\nu,x^{-\nu}),
\end{align}
where $\|\cdot\|$ is a norm in $\R^n$.

\,

The following result establishes that in order to find a solution of the RGNEP, it is enough to solve a particular Nash equilibrium problem.

\begin{proposition}\label{P1}
Assume that $X$ is a non-empty subset of $\R^n$ and
$K_1=K_2=X$. If $(\hat{x},\hat{y})\in \nep(\{f_i,K_i\}_{i\in\{1,2\}})$ then $\hat{x}\in\Rgnep(\{\theta_\nu,X\}_{\nu\in N})$.
\end{proposition}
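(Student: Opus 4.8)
The plan is to read off one optimality condition from each of the two players in the auxiliary Nash game and then combine them. Write $(\hat x,\hat y)$ for the given equilibrium of $\nep(\{f_i,K_i\}_{i\in\{1,2\}})$, so that $\hat x$ minimizes $f_1(\cdot,\hat y)=\|\cdot-\hat y\|$ over $K_1=X$ and $\hat y$ minimizes $f_2(\hat x,\cdot)$ over $K_2=X$. The first step is to exploit that $K_1=K_2=X$: since $\hat y\in K_2=X$ is a feasible point for player $1$ achieving $\|\hat y-\hat y\|=0$, the minimality of $\hat x$ gives $\|\hat x-\hat y\|=0$, hence $\hat x=\hat y$. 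In particular $\hat x\in X$, so $\hat x^\nu\in X_\nu(\hat x^{-\nu})$ for every $\nu\in N$, i.e.\ $\hat x$ is feasible for the RGNEP.

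The second step records player $2$'s optimality. Since $\hat y$ minimizes $f_2(\hat x,\cdot)=\sum_{\nu\in N}\theta_\nu(\cdot^\nu,\hat x^{-\nu})$ over $X$ and $\hat y=\hat x$, we obtain
\[
\sum_{\nu\in N}\theta_\nu(\hat x^\nu,\hat x^{-\nu})\le\sum_{\nu\in N}\theta_\nu(y^\nu,\hat x^{-\nu})\quad\text{for all }y\in X.
\]

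The third step is a contradiction argument. Suppose $\hat x\notin\Rgnep(\{\theta_\nu,X\}_{\nu\in N})$; then, since $\hat x$ is feasible, there is a player $\mu\in N$ and a point $\tilde x^\mu\in X_\mu(\hat x^{-\mu})$ with $\theta_\mu(\tilde x^\mu,\hat x^{-\mu})<\theta_\mu(\hat x^\mu,\hat x^{-\mu})$. Put $y:=(\tilde x^\mu,\hat x^{-\mu})$. Because $\tilde x^\mu\in X_\mu(\hat x^{-\mu})$, definition \eqref{R-X} gives $y\in X$, so $y$ is admissible in the displayed inequality. The crucial observation is that in $\sum_{\nu\in N}\theta_\nu(y^\nu,\hat x^{-\nu})$ every "rival block" is drawn from $\hat x$, not from $y$; hence passing from $\hat x$ to $y$ changes only the $\mu$-th summand, and it strictly decreases it. This contradicts the displayed inequality, so $\hat x\in\Rgnep(\{\theta_\nu,X\}_{\nu\in N})$.

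The whole argument is essentially bookkeeping, and I do not expect a genuine obstacle. The one point that needs care — and which is precisely the content of the definitions \eqref{neo-games} of $f_1$ and $f_2$ — is the last step: one must check that a unilateral deviation of a single player in the RGNEP is a legitimate strategy change for player $2$ in the auxiliary game (this is where $y\in X$ is used) and that, because the arguments $\hat x^{-\nu}$ are frozen, it affects exactly one term of the sum defining $f_2$, so that a strict improvement for player $\mu$ forces a strict decrease of the sum.
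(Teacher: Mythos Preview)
Your proof is correct and follows essentially the same approach as the paper: both first deduce $\hat x=\hat y$ from player~1's optimality (since $\hat y\in X$ attains norm $0$), then use player~2's optimality to obtain the summed inequality, and finally test it with $y=(y^\nu,\hat x^{-\nu})$ so that all but one summand cancel. The only cosmetic difference is that the paper carries out the last step directly for each $\nu$, whereas you phrase it as a contradiction via a single improving player $\mu$; the underlying observation---that such a $y$ changes exactly one term of the sum---is identical.
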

\begin{proof}
Clearly, $(\hat{x},\hat{y})\in \nep(\{f_i,K_i\}_{i=1,2})$ if, and only if, 
\[
\|\hat{x}-\hat{y}\|\leq\|x-\hat{y}\|\mbox{ and }f_2(\hat{x},\hat{y})\leq f_2(\hat{x},y)\mbox{ for all }x,y\in X.
\]
This is equivalent to the following
\[
\hat{x}=\hat{y}\mbox{ and }\sum_{\nu\in N}\theta_\nu(\hat{y}^\nu,\hat{x}^{-\nu})\leq \sum_{i\in N}\theta_\nu(y^\nu,\hat{x}^{-\nu}),\mbox{ for all }y\in X.
\]
Now, for each $\nu\in N$ we take $y=(y^\nu,\hat{x}_{-\nu})$ and replace it in the previous inequality, thus
\[
\theta_\nu(\hat{x})\leq \theta_\nu(y^\nu,\hat{x}^{-\nu}), \mbox{ for all }y^\nu\mbox{ such that }(y^\nu,\hat{x}^{-\nu})\in X.
\]
This completes the proof.
\end{proof}
We are ready to give a positive answer to our question.

\begin{theorem}
Theorem \ref{D} implies Theorem \ref{TR}.
\end{theorem}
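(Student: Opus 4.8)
The plan is to apply Theorem~\ref{D} to the two-player auxiliary game $(\{f_1,f_2\},\{K_1,K_2\})$ with $K_1=K_2=X$ and $f_1,f_2$ as in \eqref{neo-games}, and then to transfer the equilibrium thus obtained back to the RGNEP by means of Proposition~\ref{P1}. I would start by noting that the common strategy set $K_1=K_2=X$ is convex, compact and non-empty, which is precisely the hypothesis placed on $X$ in Theorem~\ref{TR}.

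Next I would verify, player by player, the continuity and generalized-convexity requirements of Theorem~\ref{D}. For $f_1$, whose player owns the variable $x$, the map $x\mapsto\|x-y\|$ is a norm composed with an affine map, hence convex and so quasi-convex, while $(x,y)\mapsto\|x-y\|$ is continuous on $\R^n\times\R^n$. For $f_2$, whose player owns the variable $y=(y^\nu)_{\nu\in N}$, consider $y\mapsto f_2(x,y)=\sum_{\nu\in N}\theta_\nu(y^\nu,x^{-\nu})$: the $\nu$-th summand depends only on the block $y^\nu$ and is convex in it by hypothesis, hence (composing with the linear projection onto the $\nu$-th block) convex as a function of $y$; a finite sum of convex functions being convex, $f_2(x,\cdot)$ is convex and so quasi-convex. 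Joint continuity of $f_2$ holds because each $\theta_\nu$ is continuous and $(x,y)\mapsto(y^\nu,x^{-\nu})$ is linear, so $f_2$ is a finite sum of continuous functions.

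Having checked every hypothesis, Theorem~\ref{D} yields that $\nep(\{f_i,K_i\}_{i\in\{1,2\}})$ is non-empty; choosing any $(\hat x,\hat y)$ in it and applying Proposition~\ref{P1} (whose requirement $K_1=K_2=X$ holds by construction) gives $\hat x\in\Rgnep(\{\theta_\nu,X\}_{\nu\in N})$, establishing Theorem~\ref{TR}.

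The one step that is not purely routine is the quasi-convexity of $f_2$ in player~2's variable: it does not follow from the bare assumption ``$\theta_\nu$ convex in $x^\nu$'' but works precisely because the player-2 argument $y$ is partitioned into the pairwise disjoint blocks $y^\nu$, one feeding each summand, so that block-wise convexity promotes to joint convexity. It is also worth stating explicitly that ``$\theta_\nu$ continuous'' is used here in the sense of continuity on all of $\R^n$, since Theorem~\ref{D} requires $f_2$ to be continuous on $\R^n\times\R^n$.
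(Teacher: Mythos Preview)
Your proof is correct and follows exactly the approach of the paper: apply Theorem~\ref{D} to the auxiliary two-player game $(\{f_i,K_i\}_{i\in\{1,2\}})$ and then invoke Proposition~\ref{P1}. The paper's own proof is a single sentence deferring to these two ingredients; you have simply spelled out the verification that the hypotheses of Theorem~\ref{D} hold for $f_1$ and $f_2$---in particular the point that $f_2(x,\cdot)$ is convex because the blocks $y^\nu$ are disjoint and each summand is convex in its own block---which the paper leaves implicit.
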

\begin{proof}
The result follows from Proposition \ref{P1} and Theorem \ref{D}.
\end{proof}

\begin{remark}
A few remarks are needed.
\begin{enumerate}

\item The function $f_2$ in \eqref{neo-games} was used by Rosen \cite{Rosen}  to reformulate the RGNEP as a fixed point problem. 

\item Since quasi-convexity is not preserved by the sum, a natural question arises: Is it possible to reduce the RGNEP to a classical NEP, under quasi-convexity? The answer to this question is given by the following chain of implications
\begin{figure}[h!]
\centering
\begin{tikzpicture}[>=latex]
\draw(0.1,1)node[left]{Theorem \ref{D}};
\draw[->](0.2,1)--(1,1);
\draw(1.25,1)node[right]{Theorem \ref{VIP}};
\draw[->](2.8,1)--(3.6,1);
\draw(3.75,1)node[right]{Theorem \ref{main-result}};
\end{tikzpicture}
\end{figure}
where the second implication was proved by Bueno \emph{et al.} in \cite{BCC21} and they reduced the RGNEP to a variational inequality. Later, the first implication was given by
Yu \emph{et al.} in \cite{Yu-Fa-Yang2016} and they reformulated the Stampacchia variational inequality as a classical Nash game with three players. 

\item Finally, the converse of Proposition \ref{P1} is not true in general. Indeed, consider Example 1 in \cite{AAD2014}, that is the RGNEP defined by 
\[
X=\{(x^1,x^2)\in\R^2:~x^1\geq0,~x^2\geq0\mbox{ and }2x^1+x^2\leq1\}
\]
and the functions $\theta_1.\theta_2:\R^2\to\R$ defined as
\[
\theta_1(x^1,x^2)=(x^1-2)^2\mbox{ and }\theta_2(x^1,x^2)=(x^2-2)^2. 
\]
Then
\[
\Rgnep(\{\theta_\nu,X\}_{\nu\in \{1,2\}})=\{(x^1,x^2)\in\R^2:~x^1\geq0,~x^2\geq0\mbox{ and }2x^1+x^2=1\}.
\]
Moreover, by considering the Euclidean norm in $\R^2$, the functions $f_1$ and $f_2$ defined in \eqref{neo-games} are given by
\[
f_1((x^1,x^2),(y^1,y^2))=\sqrt{(x^1-y^1)^2+(x^2-y^2)^2}\mbox{ and }
\]
\[
f_2((x^1,x^2),(y^1,y^2))=(y^1-2)^2+(y^2-2)^2.
\]
Thus, $\nep(\{f_i,K_i\}_{i\in \{1,2\}})=\{((0,1),(0,1))\}$. 

On the other hand, Figure \ref{contraex} shows the sets $X$ and
$\Rgnep(\{\theta_\nu,X\}_{\nu\in \{1,2\}})$, and also the minimizer of the function $f_2((x^1,x^2),\cdot)$.
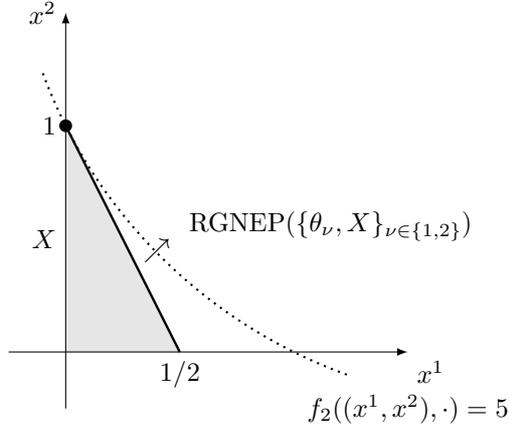
\begin{figure}[h!]
\centering
\begin{tikzpicture}[>=latex,scale=3]
\fill[gray!20](0,0)--(0,1)--(0.5,0)--(0,0);
\draw[line width=0.9](0,1)--(0.5,0);
\draw(0.3,0.45)node[right]{$\nearrow$};
\draw(0,0.5)node[left]{$X$};
\draw[->](-0.25,0)--(1.5,0)node[below right]{$x^1$};
\draw[->](0,-0.25)--(0,1.5)node[left]{$x^2$};
\draw(0.5,0.45)node[above right]{$\Rgnep(\{\theta_\nu,X\}_{\nu\in \{1,2\}})$};
\fill(0,1)circle(0.8pt)node[left]{$1$};
\draw(0.5,0)node[below]{$1/2$};
\draw[domain=-0.1:1.25,samples=100,dotted,line width=0.8]plot(\x,{2-sqrt(5-(\x-2)^2)});
\draw(1.5,-0.15)node[below]{$f_2((x^1,x^2),\cdot)=5$};
\end{tikzpicture}
\caption{The sets $X$ and $\Rgnep(\{\theta_\nu,X\}_{\nu\in \{1,2\}})$} \label{contraex}
\end{figure}
\end{enumerate}
\end{remark}

\section{Variational inequality approach}\label{variational approach}

The concept of variational inequality has proven to be an important tool in optimization, complementary problems, game theory among others. The purpose of this section is to present and existence result under discontinuity assumption and to characterize the solutions of  RGENPs by means of solutions of certain Minty variational inequalities.

\,

Given a set-valued map $T:\R^m\tos\R^m$ and a set $X\subset \R^m$, a point $x\in X$ is said to be a solution of the \emph{Stampacchia variational inequality problem} $SVI(T,X)$ if, there exists $x^*\in T(x)$ such that
\[
\langle x^*,y-x\rangle\geq0,\mbox{ for all }y\in X.
\]
Similarly, $x\in X$ is said to be a solution of the \emph{Minty variational inequality problem} $MVI(T,X)$ if,
\[
\langle y^*,y-x\rangle\geq 0,\mbox{ for all }y\in X\mbox{ and all }y^*\in T(y).
\]
Here, we use notations $SVI(T,X)$ and $MVI(T,X)$  for the problems themself and their solution sets. 

\,

A classic result of the existence of solutions for variational inequalities is Theorem 9.9 in 
\cite{JP-Aubin-1998} and we state it as follows.

\begin{theorem}\label{VIP}
Assume that $X$ is a compact, non-empty and convex subset of $\R^m$ and $T:\R^m\tos\R^m$ is a set-valued map. If $T$ is upper semicontinuous with compact, non-empty and convex values, then the $SVI(T,X)$ admits at least one solution.
\end{theorem}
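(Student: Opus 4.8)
The plan is to derive this from Kakutani's fixed-point theorem, in the same spirit as the proof of Theorem~\ref{D}. First I would choose a compact, convex, non-empty ambient set for the multiplier variable: since $T$ is upper semicontinuous with compact values and $X$ is compact, the image $T(X):=\bigcup_{x\in X}T(x)$ is compact, so $C:=\co(T(X))$ is compact, convex and non-empty, and $T(x)\subset C$ for every $x\in X$.

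Next I would consider the set-valued map $\Phi:X\times C\tos X\times C$ given by
\[
\Phi(x,x^*):=\Big(\argmin_{y\in X}\langle x^*,y\rangle\Big)\times T(x).
\]
On the compact, convex, non-empty set $X\times C$ this map has non-empty, convex, compact values (the second factor by hypothesis; the first factor because it is the set of minimizers of a linear functional over the convex compact set $X$), and its graph is closed (as the intersection, in the natural coordinates, of $\gra(T)$ with the graph of the marginal map $x^*\mapsto\argmin_{y\in X}\langle x^*,y\rangle$, both closed). Kakutani's theorem then produces a fixed point $(\bar x,\bar x^*)\in\Phi(\bar x,\bar x^*)$, and unwinding the two inclusions gives $\bar x^*\in T(\bar x)$ together with $\langle\bar x^*,y-\bar x\rangle\ge 0$ for all $y\in X$; that is, $\bar x$ solves $SVI(T,X)$.

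The only genuinely delicate point I anticipate is the closedness of the graph of $x^*\mapsto\argmin_{y\in X}\langle x^*,y\rangle$: given $x^*_n\to x^*$ and $y_n\to y$ with $\langle x^*_n,y_n\rangle\le\langle x^*_n,z\rangle$ for all $z\in X$, one passes to the limit on both sides — using the compactness of $X$ and the continuity of the pairing — to obtain $\langle x^*,y\rangle\le\langle x^*,z\rangle$ for all $z\in X$. Everything else (compactness of $T(X)$, that the Cartesian product of two closed-graph maps with compact convex values is again of that type, and the verification of the hypotheses of Kakutani's theorem) is routine bookkeeping. An alternative, Kakutani-free route is to apply the KKM lemma to the closed sets $A_y:=\{x\in X:\max_{x^*\in T(x)}\langle x^*,y-x\rangle\ge 0\}$ — the covering condition holds because $\sum_i\lambda_i\langle x^*,y_i-x\rangle=0$ whenever $x=\sum_i\lambda_i y_i$ and $x^*\in T(x)$ — and then to convert the resulting inequality $\max_{x^*\in T(\bar x)}\langle x^*,y-\bar x\rangle\ge 0$ (valid for all $y\in X$) into the existence of a single $\bar x^*\in T(\bar x)$ via a minimax theorem on $T(\bar x)\times X$; there the minimax interchange is the one step that needs care. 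Since the statement is classical, one could of course also simply cite \cite[Theorem~9.9]{JP-Aubin-1998}.
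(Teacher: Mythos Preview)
Your proof sketch is correct, but the paper does not actually prove this theorem: it states it as a classical result and simply cites \cite[Theorem~9.9]{JP-Aubin-1998}. So you are providing strictly more than the paper does here. Your Kakutani-based argument is the standard one (and is essentially how Aubin proves it), and your final remark that one could just cite the reference is in fact exactly what the paper does.
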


Some existence results for the Minty variational inequality problem can be found in \cite{AC-2013,AH04}. Important characterizations of generalized monotonicity were related to the non-emptiness of the solution set for Minty variational inequality problems, see \cite{John2001}.

\,

We now introduce some elements that we will need in the next subsections. Associated to the RGNEP defined by $X\subset\R^n$ and $\{\theta_{\nu}\}_{\nu\in N}$, for each $\nu\in N$ and each $x\in \R^n$, we consider the sets
\[
L_\nu(x):=S_{\theta_\nu(\cdot,x^{-\nu})}(x^\nu)
,~
L_\nu^s(x):=S^<_{\theta_\nu(\cdot,x^{-\nu})}(x^\nu)\mbox{ and }
L_\nu^a(x):=S^a_{\theta_\nu(\cdot,x^{-\nu})}(x^\nu).
\]
Additionally, we consider their respective normal cone $N_\nu(x),~N_\nu^< (x)$ and $N_\nu^a(x)$.
We finish by considering the set-valued maps $\mathcal{N},\mathcal{N}^s,\mathcal{N}^a:\R^n\tos\R^n$ defined as
\begin{align}\label{Maps-N}
\mathcal{N}(x):=\prod_{\nu\in N}N_\nu(x),~\mathcal{N}^s(x):=\prod_{\nu\in N}N_\nu^s(x)
\mbox{ and }\mathcal{N}^a(x):=\prod_{\nu\in N}N_\nu^a(x).
\end{align}

\subsection{Stampacchia variational inequality}\label{Stampacchia}
Inspired by Harker \cite{HARKER199181}, some authors prove the existence of generalized Nash equilibria for the RGNEP by solving certain Stampacchia variational inequality problems. In that sense, Facchinei \emph{et al.} \cite{FACCHINEI2007159} gave the following result.

\begin{theorem}[Facchinei \emph{et al.}]\label{FFP}
Assume that $X$ is a convex, compact and non-empty subset of $\R^n$. If each objective function is continuously differentiable and pseudo-convex concerning its player's variable, then the set $\Rgnep(\{\theta_\nu, X\}_{\nu\in N})$ is non-empty.
\end{theorem}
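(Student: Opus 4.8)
The plan is to follow the Harker-type strategy mentioned above: reduce the RGNEP to a single Stampacchia variational inequality over the shared constraint set $X$ and then invoke Theorem~\ref{VIP}. Concretely, I would introduce the \emph{pseudo-gradient} map $F:\R^n\to\R^n$ defined by
\[
F(x):=\big(\nabla_{x^\nu}\theta_\nu(x)\big)_{\nu\in N},
\]
where $\nabla_{x^\nu}\theta_\nu(x)$ is the gradient of $\theta_\nu(\cdot,x^{-\nu})$ evaluated at $x^\nu$. Since each $\theta_\nu$ is continuously differentiable, $F$ is continuous, hence (regarded as a set-valued map with singleton values) it is upper semicontinuous with compact, convex and non-empty values. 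As $X$ is convex, compact and non-empty, Theorem~\ref{VIP} produces a point $\hat{x}\in X$ solving $SVI(F,X)$, i.e.
\[
\sum_{\nu\in N}\big\langle \nabla_{x^\nu}\theta_\nu(\hat{x}),\,y^\nu-\hat{x}^\nu\big\rangle\geq0\qquad\text{for every }y\in X.
\]

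Next I would show that any solution of $SVI(F,X)$ lies in $\Rgnep(\{\theta_\nu,X\}_{\nu\in N})$. Fix $\nu\in N$ and take an arbitrary $y^\nu$ with $(y^\nu,\hat{x}^{-\nu})\in X$, that is $y^\nu\in X_\nu(\hat{x}^{-\nu})$ in the notation of \eqref{R-X} (this set is non-empty since $\hat{x}^\nu$ belongs to it). Substituting $y=(y^\nu,\hat{x}^{-\nu})\in X$ into the displayed inequality, every block except the $\nu$-th cancels, leaving
\[
\big\langle \nabla_{x^\nu}\theta_\nu(\hat{x}),\,y^\nu-\hat{x}^\nu\big\rangle\geq0.
\]
Because $\theta_\nu(\cdot,\hat{x}^{-\nu})$ is differentiable and pseudo-convex, this forces $\theta_\nu(y^\nu,\hat{x}^{-\nu})\geq\theta_\nu(\hat{x}^\nu,\hat{x}^{-\nu})$. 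Since $y^\nu\in X_\nu(\hat{x}^{-\nu})$ was arbitrary, $\hat{x}^\nu$ solves the minimization problem \eqref{GNEP} with the rivals fixed at $\hat{x}^{-\nu}$; and since $\nu$ was arbitrary, $\hat{x}\in\Rgnep(\{\theta_\nu,X\}_{\nu\in N})$.

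The verifications that $F$ meets the hypotheses of Theorem~\ref{VIP} and that the substitution $y=(y^\nu,\hat{x}^{-\nu})$ is admissible are routine — the latter works precisely because $X$ is a \emph{shared} constraint, so the feasible unilateral deviations of player $\nu$ at $\hat{x}$ are exactly the $y^\nu\in X_\nu(\hat{x}^{-\nu})$. The only step requiring a genuine argument is the passage from the first-order inequality to the global inequality defining a Nash equilibrium, and this is exactly where pseudo-convexity (rather than mere convexity, as in Rosen's Theorem~\ref{TR}) is used; it is also why continuous differentiability is assumed. I do not expect any obstacle beyond keeping track of the fact that pseudo-convexity is invoked, and is only available, in each player's own variable with the rivals' strategies frozen at $\hat{x}^{-\nu}$.
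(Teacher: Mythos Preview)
Your proposal is correct and follows exactly the approach the paper describes: define the pseudo-gradient map $T(x)=(\nabla_{x^\nu}\theta_\nu(x))_{\nu\in N}$, apply Theorem~\ref{VIP} to obtain a solution of $SVI(T,X)$, and then use pseudo-convexity in each player's variable to conclude that this solution lies in $\Rgnep(\{\theta_\nu,X\}_{\nu\in N})$. Your write-up is in fact more detailed than the paper's sketch, which only outlines these three steps.
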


In  the previous result, Facchinei \emph{et al.} \cite{FACCHINEI2007159} reduced the RGNEP to a Stampacchia variational inequality problem with $X$ and the map $T:\R^n\to\R^n$ defined as
\begin{align}\label{T-Facchinei}
T(x):=\left( \begin{matrix}
\nabla_{x_1}\theta_1(x)\\
\vdots\\
\nabla_{x_p}\theta_p(x)
\end{matrix}\right).
\end{align}
In this case, since each function is continuous differentiable the map $T$ is continuous. Thus, the set $S(T,X)$ is non-empty. Finally, pseudo-convexity implies that any element of $S(T,X)$ is a generalized Nash equilibrium.

\,

Later, Aussel and Dutta \cite{Aussel-Dutta} established the following result, which generalizes Theorems \ref{TR} and \ref{FFP}. 

\begin{theorem}[Aussel and Dutta]\label{AD08}
Assume that $X$ is a convex, compact and non-empty subset of $\R^n$. If each objective function is continuous and semi-strictly quasi-convex concerning its player's variable, then the set $\Rgnep(\{\theta_\nu, X\}_{\nu\in N})$ is non-empty.
\end{theorem}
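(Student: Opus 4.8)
\emph{Proof proposal.}
The plan is to encode the RGNEP as a single Minty variational inequality driven by the product adjusted‑normal‑cone map $\mathcal{N}^a$ of \eqref{Maps-N}, and then to show that this variational inequality has a solution. A direct appeal to Theorem \ref{A-D} is unavailable here: as Example \ref{example1} shows, the constraint maps $X_\nu$ built from a convex compact $X$ need not be lower semicontinuous. Concretely, I would prove the two statements
\[
MVI(\mathcal{N}^a,X)\subseteq\Rgnep(\{\theta_\nu,X\}_{\nu\in N})
\qquad\text{and}\qquad
MVI(\mathcal{N}^a,X)\neq\emptyset,
\]
which together yield the conclusion.

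For the first inclusion, take $\hat{x}\in MVI(\mathcal{N}^a,X)$ and fix a player $\mu\in N$. Given $z^\mu\in X_\mu(\hat{x}^{-\mu})$, the point $y:=(z^\mu,\hat{x}^{-\mu})$ lies in $X$ by \eqref{R-X}, and for an arbitrary $z^*\in N^a_\mu(y)$ the vector $y^*\in\R^n$ carrying $z^*$ in the $\mu$-block and $0$ elsewhere belongs to $\mathcal{N}^a(y)=\prod_{\nu\in N}N^a_\nu(y)$, because each $N^a_\nu(y)$ is a cone. Since $y$ and $\hat{x}$ differ only in the $\mu$-block, the Minty inequality $\langle y^*,y-\hat{x}\rangle\geq0$ becomes $\langle z^*,z^\mu-\hat{x}^\mu\rangle\geq0$; as $z^\mu$ and $z^*$ are arbitrary, Proposition \ref{C} --- applied to the convex set $X_\mu(\hat{x}^{-\mu})$ and the continuous quasi-convex function $\theta_\mu(\cdot,\hat{x}^{-\mu})$ --- gives $\hat{x}^\mu\in\argmin_{X_\mu(\hat{x}^{-\mu})}\theta_\mu(\cdot,\hat{x}^{-\mu})$. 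Since $\mu$ was arbitrary, $\hat{x}\in\Rgnep(\{\theta_\nu,X\}_{\nu\in N})$. Note that this step uses only continuity and quasi-convexity of each $\theta_\nu$ in its own variable and the convexity of $X$.

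The second statement is the crux, and the place where the full strength of the hypotheses enters. I would write $MVI(\mathcal{N}^a,X)=\bigcap_{y\in X}G(y)$ with
\[
G(y):=\{x\in X:\ \langle y^*,y-x\rangle\geq0\ \text{for all}\ y^*\in\mathcal{N}^a(y)\},
\]
and observe that each $G(y)$ is non-empty (it contains $y$) and closed, while $X$ is compact; by a Ky Fan-type intersection (KKM) argument it then suffices to verify that $\co\{y_1,\dots,y_k\}\subseteq\bigcup_{i=1}^{k}G(y_i)$ for every finite family $y_1,\dots,y_k\in X$, that is, that for each $z\in\co\{y_1,\dots,y_k\}$ there is an index $i$ with $\langle y^*,y_i-z\rangle\geq0$ for all $y^*\in\mathcal{N}^a(y_i)$. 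This is precisely a \emph{proper quasimonotonicity} property of $\mathcal{N}^a$, and it is exactly here that continuity together with semi-strict quasi-convexity is indispensable: for a continuous semi-strictly quasi-convex function the adjusted normal operator $x^\nu\mapsto N^a_{\theta_\nu(\cdot,x^{-\nu})}(x^\nu)$ possesses the corresponding monotonicity-type property, by the analysis of adjusted sublevel sets and adjusted normal cones in \cite{AuHad2005} (see also \cite{Aussel-Dutta}), whereas plain quasi-convexity does not suffice. I expect the genuine difficulties to be (i) the degenerate behaviour of $N^a_\nu$ at points that are not minimizers of $\theta_\nu(\cdot,x^{-\nu})$, to be controlled by continuity and by the strict part of semi-strict quasi-convexity, which forces the strict sublevel set to be non-empty, open and convex and hence separable from the current point, and (ii) deducing the property for the product map $\mathcal{N}^a$ from its per-player version, where one must exhibit a single index admissible for every player simultaneously along the convex combination $z$. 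An alternative to (i)--(ii) is to truncate $\mathcal{N}^a$ to compact convex values, derive its upper semicontinuity from the same results of \cite{AuHad2005}, obtain a Stampacchia solution via Theorem \ref{VIP}, and then upgrade it to an equilibrium --- trading the KKM step for a normalization argument. Finally, since every convex function and every differentiable pseudo-convex function is semi-strictly quasi-convex, this argument simultaneously recovers Theorems \ref{TR} and \ref{FFP}.
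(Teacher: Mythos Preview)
Your first inclusion $MVI(\mathcal{N}^a,X)\subseteq\Rgnep(\{\theta_\nu,X\}_{\nu\in N})$ is correctly argued, but the second statement $MVI(\mathcal{N}^a,X)\neq\emptyset$ is in general \emph{false}, so the primary route does not go through. The obstruction is structural, not a technicality to be handled: whenever $y^\nu\in\argmin_{\R^{n_\nu}}\theta_\nu(\cdot,y^{-\nu})$ and this argmin is a singleton, the adjusted sublevel set collapses to $\{y^\nu\}$ and hence $N^a_\nu(y)=\R^{n_\nu}$. Any Minty solution $\hat{x}$ must then satisfy $\hat{x}^\nu=y^\nu$, since the $\nu$-block of $y^*\in\mathcal{N}^a(y)$ may be chosen freely. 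But the best response $y^\nu$ varies with $y^{-\nu}$, which forces incompatible values on $\hat{x}^\nu$. Concretely, take $X=[0,2]^2$, $\theta_1(x^1,x^2)=(x^1-x^2)^2$ and $\theta_2(x^1,x^2)=(x^1+x^2-2)^2$; both are convex (hence semi-strictly quasi-convex) in their own variable and the unique equilibrium is $(1,1)$. Yet at $y=(0,0)$ one has $N^a_1(y)=\R$, forcing $\hat{x}^1=0$, while at $y=(2,2)$ again $N^a_1(y)=\R$, forcing $\hat{x}^1=2$; thus $MVI(\mathcal{N}^a,X)=\emptyset$. In particular $\mathcal{N}^a$ is \emph{not} properly quasimonotone on $X$: your difficulty (ii) is fatal, because the per-player monotonicity of $x^\nu\mapsto N^a_{\theta_\nu(\cdot,x^{-\nu})}(x^\nu)$ holds only with $x^{-\nu}$ frozen, whereas in the product map it moves. (The paper's Minty characterisation, Proposition at the end of Section~\ref{variational approach}, uses the set $\mathcal{S}(\hat{x})$ of \eqref{map-S}, not $X$; this is a \emph{quasi}-variational inequality and does not lend itself to the KKM argument you propose.)

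Your ``alternative'' is in fact the correct proof and is exactly the approach the paper attributes to Aussel and Dutta: one passes to the \emph{Stampacchia} problem for the truncated map $T_0(x)=\prod_{\nu}F_\nu(x)$, where $F_\nu(x)=\overline{B}_\nu(0,1)$ at points with $x^\nu\in\argmin\theta_\nu(\cdot,x^{-\nu})$ and $F_\nu(x)=\co\big(N^a_\nu(x)\cap S_\nu(0,1)\big)$ otherwise. The normalisation restores compact convex non-empty values and, via the analysis of adjusted sublevel sets in \cite{AuHad2005}, upper semicontinuity; Theorem~\ref{VIP} then yields $\hat{x}\in SVI(T_0,X)$, and semi-strict quasi-convexity is used precisely at the last step to show that each Stampacchia solution is an equilibrium. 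So discard the Minty/KKM plan and develop this Stampacchia argument directly.
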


The authors in \cite{Aussel-Dutta} also reduced the RGNEP to a Stampacchia variational inequality problem with $X$ and the set-valued map $T_0:\R^n\tos\R^n$ defined as
\[
T_0(x):=\prod_{\nu\in N}F_\nu(x),
\]
where 
\[
F_\nu(x):=\left\lbrace\begin{matrix}
\overline{B_\nu}(0,1),&x_\nu\in A_\nu(x_{-\nu})\\
\co(N^a_{\nu}(x)\cap S_\nu(0,1)),&\mbox{otherwise}
\end{matrix}\right.
\]
with $\overline{B}_\nu(0,1)=\{z\in\R^{n_\nu}:~\|z\|\leq1\}$, $
S_\nu(0,1)=\{z\in\R^{n_\nu}:~\|z\|=1\}$  and $A_\nu(x_{-\nu})=\argmin_{\R^{n_\nu}}\theta_\nu(\cdot,x_{-\nu})$.

\,

The idea of the proof in the previous result basically consists on showing that the map $T_0$ is upper semicontinuous to guarantee the existence of solution of the variational inequality associated. By using the semi strict quasi-convexity, any solution of this variational inequality is a generalized Nash equilibrium.

\,

Recently, inspired by the previous result, Bueno \emph{et al.}  \cite{BCC21} showed an existence result for this generalized Nash game proposed by Rosen in the general setting of  quasi-convexity. We state it as follows.
\begin{theorem}[Bueno \emph{et al.}]\label{main-result}
Assume that $X$ is a convex, compact and non-empty subset of $\R^n$. If each objective function is continuous and quasi-convex concerning its player's variable, then the set $\Rgnep(\{\theta_\nu, X\}_{\nu\in N})$ is non-empty.
\end{theorem}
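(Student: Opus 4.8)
The plan is to deduce the statement from Theorem~\ref{VIP}, by reducing the RGNEP to a Stampacchia variational inequality governed by the (normalized) adjusted normal cones, in the spirit of Aussel and Dutta \cite{Aussel-Dutta} and of Bueno \emph{et al.} \cite{BCC21}. For each $\nu\in N$ set $A_\nu(x^{-\nu}):=\argmin_{\R^{n_\nu}}\theta_\nu(\cdot,x^{-\nu})$, write $\overline{B}_\nu(0,1)$ and $S_\nu(0,1)$ for the closed unit ball and the unit sphere of $\R^{n_\nu}$, and define $T_0:\R^n\tos\R^n$ by $T_0(x):=\prod_{\nu\in N}F_\nu(x)$, where
\[
F_\nu(x):=\left\lbrace\begin{matrix}
\overline{B}_\nu(0,1),& x^\nu\in A_\nu(x^{-\nu})\\
\co\big(N_\nu^a(x)\cap S_\nu(0,1)\big),&\mbox{otherwise.}
\end{matrix}\right.
\]
The argument then splits into four parts: (i) $T_0$ has non-empty, convex and compact values; (ii) $T_0$ is upper semicontinuous; (iii) apply Theorem~\ref{VIP}; (iv) extract a generalized Nash equilibrium from a solution of $SVI(T_0,X)$. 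Parts (i) and (iii) are routine and part (iv) is short, so the whole difficulty sits in part (ii).

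For (i): convexity and compactness of $F_\nu(x)$ are immediate, being either a closed ball or the convex hull of a compact subset of a finite-dimensional space. Non-emptiness is clear on the first branch; on the second branch $x^\nu$ is not a global minimizer of the continuous quasi-convex function $\theta_\nu(\cdot,x^{-\nu})$, so its strict lower level set is non-empty and open, hence the adjusted level set $L_\nu^a(x)$ is a full-dimensional convex set with $x^\nu$ on its boundary; therefore $N_\nu^a(x)$ is a non-trivial \emph{pointed} convex cone. Being non-trivial it meets $S_\nu(0,1)$, and being pointed it contains no line, so $0\notin\co(N_\nu^a(x)\cap S_\nu(0,1))$. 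All of these are standard facts on the adjusted normal cone of a continuous quasi-convex function, available from \cite{AuHad2005,BueCot16}.

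For (ii) --- the main obstacle --- one uses that $T_0$ takes values in the fixed compact set $\prod_{\nu\in N}\overline{B}_\nu(0,1)$ with closed values, so upper semicontinuity is equivalent to closedness of $\gra(T_0)$, which reduces to each $\gra(F_\nu)$. Take $x_k\to x$ and $u_k\in F_\nu(x_k)$ with $u_k\to u$. If $x^\nu\in A_\nu(x^{-\nu})$ there is nothing to prove since $\|u_k\|\le 1$. If $x^\nu\notin A_\nu(x^{-\nu})$, first observe that $x_k^\nu\in A_\nu(x_k^{-\nu})$ can hold for only finitely many $k$: choosing $t_*$ with $\theta_\nu(t_*,x^{-\nu})<\theta_\nu(x^\nu,x^{-\nu})$ and letting $k\to\infty$ in $\theta_\nu(x_k^\nu,x_k^{-\nu})\le\theta_\nu(t_*,x_k^{-\nu})$ would contradict the choice of $t_*$, by continuity of $\theta_\nu$. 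Hence eventually $u_k\in\co(N_\nu^a(x_k)\cap S_\nu(0,1))$; writing $u_k=\sum_i\lambda_k^i v_k^i$ by Carath\'eodory's theorem with boundedly many terms and $v_k^i\in N_\nu^a(x_k)\cap S_\nu(0,1)$, passing to a subsequence along which $\lambda_k^i\to\lambda^i$ and $v_k^i\to v^i\in S_\nu(0,1)$, and invoking the closed-graph property of $x\mapsto N_\nu^a(x)$ (valid because $\theta_\nu(\cdot,x^{-\nu})$ is continuous quasi-convex and $\theta_\nu$ is jointly continuous), one gets $v^i\in N_\nu^a(x)$ and thus $u=\sum_i\lambda^i v^i\in F_\nu(x)$. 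I expect this last invocation to be the genuinely delicate point: the plain normal-cone maps $\mathcal{N}$ and $\mathcal{N}^s$ do not have closed graph, and the adjusted normal cone was introduced precisely to recover it in the continuous quasi-convex setting, so the real work is to transcribe the relevant lemmas of \cite{AuHad2005,BueCot16}, together with the handling of points where the argmin set $A_\nu$ jumps (which is exactly what the ball $\overline{B}_\nu(0,1)$ on the first branch is there for).

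For (iii), Theorem~\ref{VIP} now applies and yields $\hat x\in X$ and $\hat x^*=(\hat x^{*,\nu})_{\nu\in N}\in T_0(\hat x)$ with $\langle\hat x^*,y-\hat x\rangle\ge 0$ for all $y\in X$. For (iv), fix $\nu\in N$ and any $y^\nu\in X_\nu(\hat x^{-\nu})$, that is, $(y^\nu,\hat x^{-\nu})\in X$; testing the inequality at $y=(y^\nu,\hat x^{-\nu})$ and cancelling the coordinates that coincide with $\hat x$ gives $\langle\hat x^{*,\nu},y^\nu-\hat x^\nu\rangle\ge 0$. If $\hat x^\nu\in A_\nu(\hat x^{-\nu})$ then $\hat x^\nu$ already minimizes $\theta_\nu(\cdot,\hat x^{-\nu})$ over all of $\R^{n_\nu}$, hence in particular over $X_\nu(\hat x^{-\nu})$. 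Otherwise, by part (i), $\hat x^{*,\nu}\in\co(N_\nu^a(\hat x)\cap S_\nu(0,1))$ is a non-zero element of the convex cone $N_\nu^a(\hat x)$; if $\hat x^\nu$ were not a minimizer of $\theta_\nu(\cdot,\hat x^{-\nu})$ over $X_\nu(\hat x^{-\nu})$, some $\bar y^\nu\in X_\nu(\hat x^{-\nu})$ would satisfy $\theta_\nu(\bar y^\nu,\hat x^{-\nu})<\theta_\nu(\hat x^\nu,\hat x^{-\nu})$, so by continuity a whole neighbourhood of $\bar y^\nu$ would lie in $L_\nu^s(\hat x)\subset L_\nu^a(\hat x)$, which --- together with $\hat x^{*,\nu}\in N_\nu^a(\hat x)$, $\hat x^{*,\nu}\neq 0$ --- forces $\langle\hat x^{*,\nu},\bar y^\nu-\hat x^\nu\rangle<0$, contradicting the displayed inequality. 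In either case $\hat x^\nu$ solves \eqref{GNEP} for the constraint map \eqref{R-X}, hence $\hat x\in\Rgnep(\{\theta_\nu,X\}_{\nu\in N})$.
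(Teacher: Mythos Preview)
Your proposal is essentially correct, but it follows a different route from the one the paper attributes to \cite{BCC21}. According to the paper, Bueno \emph{et al.} reduce the RGNEP to $SVI(T_1,X)$ with $T_1(x)=\prod_{\nu\in N}\co\big(N_\nu^<(x)\cap S_\nu(0,1)\big)$, i.e.\ they work with the normal cone to the \emph{strict} sublevel set. You instead keep Aussel--Dutta's map $T_0$, built from the \emph{adjusted} normal cone $N_\nu^a$, and observe that the extraction step (your part (iv)) already goes through under plain quasi-convexity: since a strict improver $\bar y^\nu$ has an open neighbourhood inside $L_\nu^s(\hat x)\subset L_\nu^a(\hat x)$ and $\hat x^{*,\nu}\in N_\nu^a(\hat x)\setminus\{0\}$, one gets $\langle\hat x^{*,\nu},\bar y^\nu-\hat x^\nu\rangle<0$ by perturbing $\bar y^\nu$ in the direction of $\hat x^{*,\nu}$. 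This is exactly the place where Aussel--Dutta invoke semi strict quasi-convexity, and your argument shows that assumption was not needed there; so you recover Theorem~\ref{main-result} directly from $T_0$ without passing to the larger map $T_1$.

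What each approach buys: the $T_1$ route of \cite{BCC21} trades a larger (hence easier to prove upper semicontinuous) operator for a slightly more delicate extraction step; your $T_0$ route reuses the upper semicontinuity already established in \cite{Aussel-Dutta,AuHad2005} and sharpens only the final optimality argument. The one point you should not leave as a black box is the closed-graph property of $(x^\nu,x^{-\nu})\mapsto N_\nu^a(x)$ in the \emph{parametric} setting (the function $\theta_\nu(\cdot,x^{-\nu})$ itself varies): the results in \cite{AuHad2005} are stated for a single quasi-convex function, so you must either cite the parametric version proved in \cite{Aussel-Dutta} or reproduce the argument that joint continuity of $\theta_\nu$ propagates the required stability of $L_\nu^a$. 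With that lemma in hand, your proof is complete.
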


The authors in \cite{BCC21} also reduced the RGNEP as a variational inequality problem. In this case, they consider  the set-valued map $T_1:\R^n\tos\R^n$ defined as 
\begin{align}\label{T1}
T_1(x):=\prod_{\nu\in N}G_\nu(x)
\end{align}
where $G_\nu(x)=\co(N^<_\nu(x)\cap S_\nu(0,1))$.
Furthermore, it is not difficult to see that $\gra(T_0)\subset \gra(T_1)$ and this inclusion can be strict. Indeed, consider for instance the RGNEP defined by $X\subset\R^3$
and the functions $\theta_1,\theta_2:\R^3\to\R$ defined by
\[
\theta_1((x^1,y^1),x^2)=\left\lbrace\begin{matrix}
|x^1|+|y^1|,&|x^1|+|y^1|\leq1\\
1,&\mbox{otherwise}
\end{matrix}
\right.
\mbox{ and }\theta_2((x^1,y^1),x^2)=x^2
\]
Both functions are continuous and quasi-convex, but $\theta_1$ is not semi strictly quasi-convex. Moreover, it is not difficult to show that 
\[
(1,2)\in N_1^<((10,0),0)\setminus N_1^a((10,0),0).
\]
Consequently,  $\gra(T_0)\subset \gra(T_1)$ and $T_0\neq T_1$.

\begin{remark}
The existence of generalized Nash equilibria in Example \ref{example1} is guaranteed by Theorem \ref{main-result}. Furthermore, this generalizes Theorems \ref{AD08}, \ref{FFP} and Theorem \ref{D}. On the other hand, since Theorem \ref{D} does not require semi strict quasi-convexity, this is not a consequence of Theorem \ref{AD08}. 
 In Figure \ref{Implications 2}, we represent the links between these results.
\begin{figure}[h!]
\centering
\begin{tikzpicture}[>=latex]
\draw(0,1)node[left]{Theorem \ref{VIP}};
\draw[->](1.75,0.8)--(1.75,0.2);
\draw[->](0.1,1)--(0.9,1);
\draw(1,1)node[right]{Theorem \ref{main-result}};
\draw(1,0)node[right]{Theorem \ref{D}};
\draw[->](2.5,1)--(3.3,1);
\draw(3.4,1)node[right]{Theorem \ref{AD08}};
\draw[->](4.9,1)--(5.7,1);
\draw(5.7,1)node[right]{Theorem \ref{FFP}};
\end{tikzpicture}
\caption{Relationship between Theorems \ref{D}, \ref{main-result},  \ref{AD08} and \ref{FFP} }\label{Implications 2}
\end{figure}
\end{remark}

In the following result, we consider a generalized Nash game proposed by Rosen with discontinuous functions, which is a consequence of Theorem \ref{main-result} and Proposition \ref{pseudo-CC}. 

\begin{theorem}\label{Rosen-pseudocontinuous}
Assume that $X$ is a convex, compact and non-empty subset of $\R^n$. If each objective function is pseudo-continuous and quasi-convex concerning its player's variable, then the set $\Rgnep(\{\theta_\nu, X\}_{\nu\in N})$ is non-empty.
\end{theorem}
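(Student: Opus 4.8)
The plan is to \emph{linearize} each payoff by means of Scalzo's representation (Proposition \ref{pseudo-CC}) and then invoke the quasi-convex existence result of Bueno \emph{et al.} (Theorem \ref{main-result}). Since $\R^n$ is connected and, for every $\nu\in N$, the function $\theta_\nu$ is pseudo-continuous, Proposition \ref{pseudo-CC} provides a continuous function $u_\nu:\R^n\to\R$ and an increasing function $h_\nu:u_\nu(\R^n)\to\R$ with $\theta_\nu=h_\nu\circ u_\nu$. (It is equally valid, and slightly more economical, to apply Proposition \ref{pseudo-CC} on the convex — hence connected — set $X$ alone; applying it on all of $\R^n$ has the advantage of keeping the $u_\nu$ globally defined so that Theorem \ref{main-result} applies verbatim.)

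Next I would verify that the family $\{u_\nu,X\}_{\nu\in N}$ meets the hypotheses of Theorem \ref{main-result}. The set $X$ is convex, compact and non-empty by assumption, and each $u_\nu$ is continuous by construction; it remains to see that $u_\nu$ is quasi-convex with respect to its own variable. Fix $x^{-\nu}$. Because $h_\nu$ is increasing, one has $S_{u_\nu(\cdot,x^{-\nu})}(x^\nu)=S_{\theta_\nu(\cdot,x^{-\nu})}(x^\nu)$ for every $x^\nu$; since $\theta_\nu(\cdot,x^{-\nu})$ is quasi-convex its lower level sets are convex, hence so are those of $u_\nu(\cdot,x^{-\nu})$, which is therefore quasi-convex. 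Theorem \ref{main-result} then yields a point $\hat x\in\Rgnep(\{u_\nu,X\}_{\nu\in N})$.

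Finally I would transfer the equilibrium back to the original data. For each $\nu\in N$ and each $y^\nu$ with $(y^\nu,\hat x^{-\nu})\in X$ we have $u_\nu(\hat x)\le u_\nu(y^\nu,\hat x^{-\nu})$; applying the increasing map $h_\nu$ gives $\theta_\nu(\hat x)=h_\nu(u_\nu(\hat x))\le h_\nu(u_\nu(y^\nu,\hat x^{-\nu}))=\theta_\nu(y^\nu,\hat x^{-\nu})$. Hence $\hat x\in\Rgnep(\{\theta_\nu,X\}_{\nu\in N})$, so this set is non-empty. In fact the argument is reversible, so the two solution sets coincide.

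There is no serious obstacle; this is the game-theoretic counterpart of the Remark following Proposition \ref{C}. The only point requiring attention is that ``increasing'' must be understood in the strict sense, so that $h_\nu(a)\le h_\nu(b)\iff a\le b$: this equivalence is exactly what makes the level sets of $\theta_\nu$ and $u_\nu$ (hence quasi-convexity in the own variable, the minimizers, and the defining inequalities of the RGNEP) agree. Everything else is a routine check of the hypotheses of Theorem \ref{main-result}.
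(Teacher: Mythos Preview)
Your proof is correct and follows essentially the same route as the paper: apply Proposition~\ref{pseudo-CC} to write $\theta_\nu=h_\nu\circ u_\nu$ with $u_\nu$ continuous, observe that $u_\nu$ inherits quasi-convexity in its own variable (because the level sets coincide), invoke Theorem~\ref{main-result}, and conclude that the two RGNEPs share the same solution set. You have in fact been more explicit than the paper on the verification of quasi-convexity and on the need for $h_\nu$ to be strictly increasing, which is precisely what makes the level sets --- and hence the equilibria --- agree.
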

\begin{proof}
Thanks to Proposition \ref{pseudo-CC}, for each player $\nu$ there exists a continuous function $\varphi_\nu:\R^n\to\R$ and an increasing function $h_\nu: \varphi_\nu(\R^n)\to\R$ such that
\[
\theta_\nu=h_\nu\circ \varphi_\nu.
\]
It is not difficult to show that $\varphi_\nu$ is quasi-convex concerning its variable's player and also $\Rgnep(\{\theta_\nu,X\}_{\nu\in N})=\Rgnep(\{\varphi_\nu,X\}_{\nu\in N})$. Therefore, the result follows from Theorem \ref{main-result}.
\end{proof}
As a direct consequence of the above result, we recover Theorem 3.2 in \cite{MORGAN2007}, which is stated below.
\begin{corollary}
Suppose for each $\nu\in N$, $K_\nu$ is a compact, convex and non-empty set, the objective function $\theta_\nu$ is pseudo-continuous and quasi-convex concerning its player's variable. Then, the set $\nep(\{\theta_\nu,K_\nu\}_{\nu\in N})$ is non-empty.
\end{corollary}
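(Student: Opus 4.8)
The plan is to realize the classical Nash equilibrium problem as a particular instance of the generalized Nash game proposed by Rosen in which the shared constraint set is a Cartesian product, and then to invoke Theorem~\ref{Rosen-pseudocontinuous}.

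First I would set $X:=K=\prod_{\nu\in N}K_\nu$. Since each $K_\nu$ is a compact, convex and non-empty subset of $\R^{n_\nu}$, the product $X$ is a compact, convex and non-empty subset of $\R^n$, so it satisfies the hypotheses on the shared constraint. Next I would check that the constraint maps induced by $X$ through \eqref{R-X} decouple: for every $\nu\in N$ and every $x^{-\nu}\in K_{-\nu}$ one has $X_\nu(x^{-\nu})=\{x^\nu\in\R^{n_\nu}:(x^\nu,x^{-\nu})\in X\}=K_\nu$, because membership of $(x^\nu,x^{-\nu})$ in the product set $X$ is equivalent to $x^\nu\in K_\nu$ (the other coordinates are unconstrained relative to $x^\nu$). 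In particular the projection set defined in \eqref{Projection-X} coincides with the given $K_\nu$, so the data are consistent. Consequently problem \eqref{GNEP} defining player $\nu$'s best response in the RGNEP is literally problem \eqref{NEP}, and therefore $\Rgnep(\{\theta_\nu,X\}_{\nu\in N})=\nep(\{\theta_\nu,K_\nu\}_{\nu\in N})$.

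Then I would apply Theorem~\ref{Rosen-pseudocontinuous} to this $X$: each $\theta_\nu$ is pseudo-continuous and quasi-convex with respect to its player's variable, and $X$ is convex, compact and non-empty, so $\Rgnep(\{\theta_\nu,X\}_{\nu\in N})$ is non-empty. Combining this with the identification from the previous paragraph yields that $\nep(\{\theta_\nu,K_\nu\}_{\nu\in N})$ is non-empty, which is the claim.

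I do not expect a serious obstacle: the argument is purely a matter of observing that a product constraint makes the RGNEP collapse to the NEP. The only point requiring an explicit line of verification is the equality $X_\nu(x^{-\nu})=K_\nu$, and this is immediate from the definition of a Cartesian product; everything else is inherited from Theorem~\ref{Rosen-pseudocontinuous}.
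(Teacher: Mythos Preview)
Your proposal is correct and follows exactly the approach the paper intends: the corollary is stated as a ``direct consequence'' of Theorem~\ref{Rosen-pseudocontinuous}, and the implicit argument is precisely your observation that taking $X=\prod_{\nu\in N}K_\nu$ makes the RGNEP collapse to the NEP. Your explicit verification that $X_\nu(x^{-\nu})=K_\nu$ is the only step worth writing out, and you have done it.
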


\subsection{Minty variational inequality}
Cavazzuti \emph{et al.} \cite{Cavazzuti} considered for each $x\in X\subset\R^n$ the set
\begin{align}\label{map-S}
\mathcal{S}(x):=\bigcup_{\nu\in N}X_\nu(x^{-\nu})\times\{x^{-\nu}\}.
\end{align}
It is not difficult to verify that $\mathcal{S}(x)\subset \mathcal{X}(x)$, for all $x\in X$, where  $\mathcal{X}$ is the map defined as
\[
\mathcal{X}(x):=\prod_{\nu\in N} X_\nu(x^{-\nu}).
\] 
Figure \ref{Cava} gives us an example about it on  $\R^2$.
\begin{figure}[h!]
\centering
\begin{tikzpicture}[scale=1.6,>=latex]
\fill[gray!20](-1,0)--(0,1)--(1,0)--(0,-1)--(-1,0);
\draw(-1,0)--(0,1)--(1,0)--(0,-1)--(-1,0);
\draw(-0.25,0.95)node[left]{$X$};
\fill(1,0)circle(1pt)node[below]{$1$};
\fill(-1,0)circle(1pt)node[below]{$-1$};
\fill(0,1)circle(1pt)node[left]{$1$};
\fill(0,-1)circle(1pt)node[left]{$-1$};
\draw[line width=0.7](-0.5,0.5)--(0.5,0.5)--(0.5,-0.5);
\fill(0.5,0.5)circle(1pt)node[right]{$x$};
\draw[->,gray](-1.5,0)--(1.5,0)node[below right,black]{$x^1$};
\draw[->,gray](0,-1.5)--(0,1.5)node[left,black]{$x^2$};
\draw(0.6,0.6)node[above right]{$ \mathcal{S}(x)$};
\end{tikzpicture}
\hfill
\begin{tikzpicture}[scale=1.6,>=latex]
\fill[gray!20](-1,0)--(0,1)--(1,0)--(0,-1)--(-1,0);
\draw(-1,0)--(0,1)--(1,0)--(0,-1)--(-1,0);
\draw(-0.25,0.95)node[left]{$X$};
\fill(1,0)circle(1pt)node[below]{$1$};
\fill(-1,0)circle(1pt)node[below]{$-1$};
\fill(0,1)circle(1pt)node[left]{$1$};
\fill(0,-1)circle(1pt)node[left]{$-1$};
\fill[gray!50](-0.5,0.5)--(0.5,0.5)--(0.5,-0.5)--(-0.5,-0.5)--(-0.5,0.5);
\draw[line width=0.7](-0.5,0.5)--(0.5,0.5)--(0.5,-0.5)--(-0.5,-0.5)--(-0.5,0.5);
\fill(0.5,0.5)circle(1pt)node[right]{$x$};
\draw[->,gray](-1.5,0)--(1.5,0)node[below right,black]{$x^1$};
\draw[->,gray](0,-1.5)--(0,1.5)node[left,black]{$x^2$};
\draw(0.6,0.6)node[above right]{$ \mathcal{X}(x)$};
\end{tikzpicture}
\caption{Sets  $ \mathcal{S}(x)$ and  $ \mathcal{X}(x)$.}\label{Cava}
\end{figure}
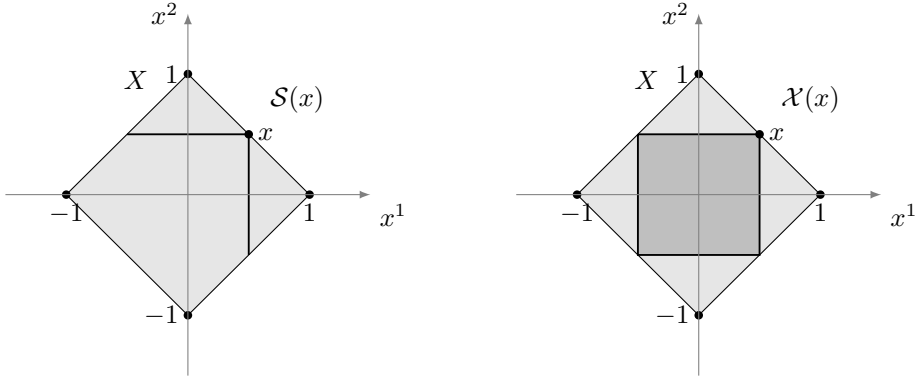

The following result is Theorem 3.2 in \cite{Cavazzuti}, which establishes a sufficient and necessary  condition for a point to be a generalized Nash equilibrium.
\begin{theorem}[Cavazzuti \emph{et al.} ]\label{Teo-Cava}
Assume that $X$ is a convex and non-empty subset of $\R^n$ and each objective function is differentiable. If $\hat{x}$ is a solution of $MVI(T,\mathcal{S}(\hat{x}))$, that is
\begin{align}\label{Mspecial}
\langle T(y),y-\hat{x}\rangle\geq0 \mbox{ for all }y\in \mathcal{S}(\hat{x}),
\end{align}
where $T$ is defined as in \eqref{T-Facchinei}.  
The converse holds if, each $\theta_\nu $ is quasi-convex in its player's variable.
\end{theorem}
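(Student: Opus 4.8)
The plan is to fix a player $\nu\in N$ and reduce everything to the behaviour of $\theta_\nu$ along the segment joining $\hat{x}^\nu$ to a competing strategy. Throughout I take $\hat{x}\in X$, which is built into the notion of a solution of $MVI(T,\mathcal{S}(\hat{x}))$ and is automatic when $\hat{x}$ is a generalized Nash equilibrium; then $\hat{x}^\nu\in X_\nu(\hat{x}^{-\nu})$ and $X_\nu(\hat{x}^{-\nu})$ is a non-empty convex set, being a slice of the convex set $X$. The observation that makes the Minty problem over $\mathcal{S}(\hat{x})$ convenient is that every $y\in\mathcal{S}(\hat{x})$ has the form $y=(y^\nu,\hat{x}^{-\nu})$ for some $\nu\in N$ and some $y^\nu\in X_\nu(\hat{x}^{-\nu})$; hence $y-\hat{x}=(y^\nu-\hat{x}^\nu,0)$, and since the $\nu$-block of $T(y)$ is $\nabla_{x^\nu}\theta_\nu(y)$, the scalar product appearing in \eqref{Mspecial} reduces, for such a $y$, to $\langle T(y),y-\hat{x}\rangle=\langle\nabla_{x^\nu}\theta_\nu(y^\nu,\hat{x}^{-\nu}),\,y^\nu-\hat{x}^\nu\rangle$.

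For the forward implication, suppose $\hat{x}$ solves $MVI(T,\mathcal{S}(\hat{x}))$, fix $\nu$ and $z^\nu\in X_\nu(\hat{x}^{-\nu})$ (the case $z^\nu=\hat{x}^\nu$ being trivial), and set $\psi(t):=\theta_\nu(\hat{x}^\nu+t(z^\nu-\hat{x}^\nu),\hat{x}^{-\nu})$ for $t\in[0,1]$. By convexity the whole segment lies in $X_\nu(\hat{x}^{-\nu})$, so every point $(\hat{x}^\nu+t(z^\nu-\hat{x}^\nu),\hat{x}^{-\nu})$ lies in $\mathcal{S}(\hat{x})$; plugging it into \eqref{Mspecial} and using the identity above gives $t\,\psi'(t)\geq0$, hence $\psi'(t)\geq0$ for every $t\in(0,1]$, where $\psi'(t)=\langle\nabla_{x^\nu}\theta_\nu(\hat{x}^\nu+t(z^\nu-\hat{x}^\nu),\hat{x}^{-\nu}),z^\nu-\hat{x}^\nu\rangle$ by the chain rule. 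The mean value theorem applied to $\psi$ on $[0,1]$ then yields $\psi(1)-\psi(0)=\psi'(\xi)\geq0$ for some $\xi\in(0,1)$, that is $\theta_\nu(z^\nu,\hat{x}^{-\nu})\geq\theta_\nu(\hat{x}^\nu,\hat{x}^{-\nu})$; as $\nu$ and $z^\nu$ were arbitrary, $\hat{x}\in\Rgnep(\{\theta_\nu,X\}_{\nu\in N})$. Note that no convexity of the $\theta_\nu$ enters here.

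For the converse, assume $\hat{x}\in\Rgnep(\{\theta_\nu,X\}_{\nu\in N})$ and that each $\theta_\nu$ is quasi-convex in its player's variable; fix $\nu$ and $y^\nu\in X_\nu(\hat{x}^{-\nu})$ with $y^\nu\neq\hat{x}^\nu$, and put $\psi(t):=\theta_\nu(\hat{x}^\nu+t(y^\nu-\hat{x}^\nu),\hat{x}^{-\nu})$ on $[0,1]$. This $\psi$ is differentiable and, being the restriction of a quasi-convex function to a line, quasi-convex, while the equilibrium property gives $\psi(0)\leq\psi(1)$. I would then argue $\psi'(1)\geq0$: if not, then since $\psi'(1)<0$ one has $\psi(1-\varepsilon)>\psi(1)$ for all small $\varepsilon>0$, whereas $1-\varepsilon=\varepsilon\cdot0+(1-\varepsilon)\cdot1$ and quasi-convexity give $\psi(1-\varepsilon)\leq\max\{\psi(0),\psi(1)\}=\psi(1)$, a contradiction. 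Evaluating, $\langle\nabla_{x^\nu}\theta_\nu(y^\nu,\hat{x}^{-\nu}),y^\nu-\hat{x}^\nu\rangle=\psi'(1)\geq0$, which by the first paragraph is precisely $\langle T(y),y-\hat{x}\rangle\geq0$ for $y=(y^\nu,\hat{x}^{-\nu})$; since every element of $\mathcal{S}(\hat{x})$ is of this form for some $\nu\in N$, $\hat{x}$ solves $MVI(T,\mathcal{S}(\hat{x}))$.

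The routine parts are the chain-rule computation of $\psi'$ and the slice-convexity of $X_\nu(\hat{x}^{-\nu})$. The one point needing care is the converse implication: quasi-convexity is genuinely necessary there, because a merely differentiable $\psi$ with $\psi(0)=\min_{[0,1]}\psi$ may still satisfy $\psi'(1)<0$ (for instance $\psi(t)=\sin\pi t$), so the Minty inequality at the endpoint can fail — which is exactly why the hypothesis is invoked only for the converse.
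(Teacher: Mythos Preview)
Your proof is correct. The paper itself does not supply a proof of this theorem: it is quoted there as Theorem~3.2 of Cavazzuti \emph{et al.}\ and attributed without argument, so there is no in-paper proof to compare against line by line. Your reduction to the one-variable function $\psi$ along the segment joining $\hat{x}^\nu$ to a competitor, together with the observation that every $y\in\mathcal{S}(\hat{x})$ differs from $\hat{x}$ in a single block (so that $\langle T(y),y-\hat{x}\rangle$ collapses to a partial-gradient pairing), is the natural line of attack and is exactly how the original reference proceeds. The forward implication via $t\,\psi'(t)\geq0$ and the mean value theorem is clean and, as you note, uses no convexity; for the converse your contradiction from $\psi'(1)<0$ against the quasi-convexity bound $\psi(1-\varepsilon)\leq\max\{\psi(0),\psi(1)\}$ is the standard way to obtain $\langle\nabla_{x^\nu}\theta_\nu(y),y^\nu-\hat{x}^\nu\rangle\geq0$ from $\theta_\nu(y)\geq\theta_\nu(\hat{x})$ under quasi-convexity.

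For context, the paper's own contribution in this subsection is the normal-cone extension stated in the Proposition immediately after Theorem~\ref{Teo-Cava}, whose proof is delegated to Propositions~\ref{A}, \ref{B} and \ref{C}. Those propositions play the same structural role as your two paragraphs---necessity of the Minty inequality from optimality, and sufficiency under quasi-convexity---but are phrased via the cones $N$, $N^s$, $N^a$ instead of gradients; in that sense your argument is the differentiable specialisation of the paper's scheme.
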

The inequality given in \eqref{Mspecial} is known in the literature as \emph{Minty quasi-variational inequality}.

\,

As a direct consequence of the above result, we have the following corollary.
\begin{corollary}\label{coroCava}
Assume that $X$ is a convex and non-empty subset of $\R^n$ and each objective function is differentiable. If $\hat{x}\in X$  is a solution of the  $MVI(T,X)$; then  $\hat{x}\in\Rgnep(\{\theta_\nu, X\}_{\nu\in N})$.
\end{corollary}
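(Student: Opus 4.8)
The plan is to deduce this directly from Theorem \ref{Teo-Cava} by exploiting the inclusion $\mathcal{S}(\hat{x})\subset X$, which holds because each $X_\nu(x^{-\nu})\times\{x^{-\nu}\}\subset X$ (by the very definition \eqref{R-X} of $X_\nu$), hence their union $\mathcal{S}(\hat{x})$ also lies in $X$. First I would observe that if $\hat{x}\in X$ solves $MVI(T,X)$, i.e. $\langle T(y),y-\hat{x}\rangle\geq 0$ for every $y\in X$, then in particular this inequality holds for every $y\in\mathcal{S}(\hat{x})$, since $\mathcal{S}(\hat{x})$ is a subset of $X$. Therefore $\hat{x}$ is a solution of $MVI(T,\mathcal{S}(\hat{x}))$, that is, inequality \eqref{Mspecial} is satisfied.

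Next I would invoke the "converse" direction of Theorem \ref{Teo-Cava}: under the hypothesis that each $\theta_\nu$ is quasi-convex in its player's variable, a solution of $MVI(T,\mathcal{S}(\hat{x}))$ is a generalized Nash equilibrium, i.e. $\hat{x}\in\Rgnep(\{\theta_\nu,X\}_{\nu\in N})$. This gives the conclusion. The only point needing a word of care is the standing differentiability assumption — both the Corollary and Theorem \ref{Teo-Cava} assume each $\theta_\nu$ is differentiable, so $T$ from \eqref{T-Facchinei} is well defined and the cited theorem applies verbatim; there is no extra hypothesis to verify.

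There is no real obstacle here: the entire content is the set inclusion $\mathcal{S}(\hat{x})\subset X$ combined with the monotonicity of the Minty condition under shrinking the test set (a Minty solution over a larger set is a fortiori a Minty solution over any subset containing the point), followed by a black-box application of Theorem \ref{Teo-Cava}. If anything, the mildly delicate step is making sure the quasi-convexity hypothesis in the statement of the Corollary is exactly the one needed to invoke the converse half of Theorem \ref{Teo-Cava}; inspecting the statements shows they match, so the proof is a one-line reduction.
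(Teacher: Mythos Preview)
Your reduction via the inclusion $\mathcal{S}(\hat{x})\subset X$ is exactly the paper's approach, and the observation that a Minty solution over $X$ is automatically a Minty solution over the smaller set $\mathcal{S}(\hat{x})$ is correct. However, there is a genuine gap in the second step.

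The Corollary does \emph{not} assume quasi-convexity of the objective functions --- reread its hypotheses: only convexity of $X$ and differentiability of each $\theta_\nu$ are assumed. Your proposal explicitly invokes quasi-convexity (``under the hypothesis that each $\theta_\nu$ is quasi-convex\ldots''), and your closing remark about ``making sure the quasi-convexity hypothesis in the statement of the Corollary'' matches the one in Theorem~\ref{Teo-Cava} is based on a misreading: there is no such hypothesis to match.

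The error stems from inverting the two directions of Theorem~\ref{Teo-Cava}. In that theorem the \emph{forward} implication is: if $\hat{x}$ solves $MVI(T,\mathcal{S}(\hat{x}))$ then $\hat{x}$ is a generalized Nash equilibrium, and this requires only differentiability (one sees it, for each $\nu$, by setting $g(t)=\theta_\nu(\hat{x}^\nu+t(y^\nu-\hat{x}^\nu),\hat{x}^{-\nu})$ and noting that the Minty inequality forces $g'(t)\geq 0$ for $t\in(0,1]$). It is the \emph{converse} (generalized Nash equilibrium $\Rightarrow$ Minty solution) that needs quasi-convexity. You have these reversed, which is why you end up importing an assumption that is neither present nor needed. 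Once you invoke the correct (forward) direction of Theorem~\ref{Teo-Cava}, your argument is complete and coincides with the paper's one-line deduction.
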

The converse of Corollary \ref{coroCava} is not true in general as  the following example shows.
\begin{example}
Consider the two-game with $X=[-1,1]^2\subset\R^2$ and the functions $\theta_1,\theta_2:\R^2\to\R$ defined as
\[
\theta_1(x^1,x^2)=(x^1x^2)^2~\mbox{ and }~\theta_2(x^1,x^2)=(x^2)^3.
\]
Both functions are continuously differentiable and quasi-convex concerning their player's variable. Moreover, it is not difficult to show that $(0,-1)$ is a Nash equilibrium. 

On the other hand, the map $T$, defined as in \eqref{T-Facchinei}, is given by
\[
T(x^1,x^2)= (2x^1(x^2)^2,3(x^2)^2)
\]
and for $(x^1,x^2)=(0,1/2)$ we have  $\langle (0,3/4),(0,1/2)-(0,-1)\rangle=-3/8$,
which shows that $(0,-1)$ is not a solution of the $MVI(T,X)$.
\end{example}

We finish this section with the following result, which is an extension of Theorem \ref{Teo-Cava}. 
\begin{proposition}
Assume that $X$ is a convex and non-empty subset of $\R^n$, $\mathcal{S}$ is defined as in \eqref{map-S}, and $\mathcal{N}$, $\mathcal{N}^s$ and $\mathcal{N}^a$ are defined as in \eqref{Maps-N}, the following implications hold
\begin{enumerate}
\item If $\hat{x}\in \Rgnep(\{\theta_\nu, X\}_{\nu\in N})$  then 
$\hat{x}\in MVI(\mathcal{N},\mathcal{S}(\hat{x}))$.
\item Assume that each objective function is continuous and quasi-convex concerning its player's variable. If $\hat{x}\in MVI(\mathcal{N}^s,\mathcal{S}(\hat{x}))$,
then $\hat{x}\in \Rgnep(\{\theta_\nu, X\}_{\nu\in N})$.
\item If $\hat{x}\in \Rgnep(\{\theta_\nu, X\}_{\nu\in N})$  then $\hat{x}\in MVI(\mathcal{N}^a, \mathcal{S}(\hat{x}))$.
The converse holds provided that each objective function is continuous and quasi-convex concerning its player's variable. 
\end{enumerate}
\end{proposition}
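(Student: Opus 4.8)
The plan is to reduce each of the three implications, player by player, to the corresponding one-variable statement already established in Propositions \ref{A}, \ref{B} and \ref{C}; the only thing that needs to be set up is a dictionary translating membership in $MVI(\mathcal{N},\mathcal{S}(\hat{x}))$ (and its $\mathcal{N}^s$- and $\mathcal{N}^a$-variants) into such single-player conditions. By the definition of $\mathcal{S}$ in \eqref{map-S}, a point $y$ lies in $\mathcal{S}(\hat{x})$ exactly when $y=(y^\mu,\hat{x}^{-\mu})$ for some $\mu\in N$ and some $y^\mu\in X_\mu(\hat{x}^{-\mu})$; for such $y$ one has $y-\hat{x}=(y^\mu-\hat{x}^\mu,0)$, so, writing $y^*$ in blocks as $y^*=(y^{*\nu})_{\nu\in N}$, the pairing $\langle y^*,y-\hat{x}\rangle$ collapses to $\langle y^{*\mu},y^\mu-\hat{x}^\mu\rangle$. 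Since $\mathcal{N}$, $\mathcal{N}^s$ and $\mathcal{N}^a$ are the product cones of \eqref{Maps-N}, quantifying $y^*$ over (say) $\mathcal{N}(y)$ is then the same as quantifying $y^{*\mu}$ over the single factor $N_\mu(y)$ --- the conditions on the remaining blocks being vacuous, as they are paired against $0$ --- and $N_\mu(y^\mu,\hat{x}^{-\mu})$ is precisely the normal cone, in the sense of Section \ref{preliminaries}, of the one-variable function $\theta_\mu(\cdot,\hat{x}^{-\mu})$ at the point $y^\mu$. Hence $\hat{x}\in MVI(\mathcal{N},\mathcal{S}(\hat{x}))$ is equivalent to: $\hat{x}\in X$, and for every $\mu\in N$ the point $\hat{x}^\mu$ satisfies the necessary condition of Proposition \ref{A} on $X_\mu(\hat{x}^{-\mu})$ with $f=\theta_\mu(\cdot,\hat{x}^{-\mu})$; and the same equivalence holds verbatim with $\mathcal{N}^s$, $N_\mu^s$ and \eqref{B1} (resp.\ $\mathcal{N}^a$, $N_\mu^a$ and Proposition \ref{C}) in place of $\mathcal{N}$, $N_\mu$ and Proposition \ref{A}.

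With this dictionary the three items are immediate. For (1): if $\hat{x}\in\Rgnep(\{\theta_\nu,X\}_{\nu\in N})$ then $\hat{x}\in X$ and each component $\hat{x}^\mu$ minimizes $\theta_\mu(\cdot,\hat{x}^{-\mu})$ over $X_\mu(\hat{x}^{-\mu})$, so Proposition \ref{A} (applied with $X=X_\mu(\hat{x}^{-\mu})$ and $f=\theta_\mu(\cdot,\hat{x}^{-\mu})$) yields the $\mu$-th single-player inequality, and the dictionary gives $\hat{x}\in MVI(\mathcal{N},\mathcal{S}(\hat{x}))$. For (2): the slices $X_\mu(\hat{x}^{-\mu})$ are convex because $X$ is, and each $\theta_\mu(\cdot,\hat{x}^{-\mu})$ is continuous and quasi-convex by hypothesis; the assumption $\hat{x}\in MVI(\mathcal{N}^s,\mathcal{S}(\hat{x}))$ delivers, through the dictionary, exactly hypothesis \eqref{B1} of Proposition \ref{B} for every $\mu$, so $\hat{x}^\mu\in\argmin_{X_\mu(\hat{x}^{-\mu})}\theta_\mu(\cdot,\hat{x}^{-\mu})$ for all $\mu$, i.e.\ $\hat{x}\in\Rgnep(\{\theta_\nu,X\}_{\nu\in N})$. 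Item (3) follows in the same way from Proposition \ref{C}, which packages both directions under convexity of $X$ together with continuity and quasi-convexity of the $\theta_\mu(\cdot,\hat{x}^{-\mu})$.

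The one genuinely delicate point --- the step I would write out with care --- is the bookkeeping in this reduction: one must verify that quantifying $y^*$ over the full product cone at $y$ really does collapse to quantifying over the single relevant factor, which rests on the fact that every point of $\mathcal{S}(\hat{x})$ differs from $\hat{x}$ in at most one coordinate block. (Replacing $\mathcal{S}(\hat{x})$ by the larger set $\mathcal{X}(\hat{x})$ would destroy this, which is precisely why one works with $\mathcal{S}$.) Finally, as in the remark following Proposition \ref{C}, continuity can be relaxed to pseudo-continuity throughout: by Proposition \ref{pseudo-CC} each $\theta_\mu$ may be replaced by a continuous representative having the same level sets, hence the same normal cones, the same minimizers, and the same generalized Nash equilibria.
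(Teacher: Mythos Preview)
Your proof is correct and follows essentially the same approach as the paper, which simply cites Propositions \ref{A}, \ref{B} and \ref{C} for parts 1, 2 and 3 respectively. Your write-up makes explicit the coordinate-block bookkeeping (that every $y\in\mathcal{S}(\hat{x})$ differs from $\hat{x}$ in a single block, so the pairing $\langle y^*,y-\hat{x}\rangle$ reduces to a single-player inner product) which the paper leaves implicit; this is helpful and accurate.
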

\begin{proof}
Part {\it 1.} follows from Proposition \ref{A} and part {\it 2.} is a consequence of Proposition \ref{B}. Finally, part {\it 3.} follows from Proposition \ref{C}.
\end{proof}

\section{Coerciveness conditions}\label{coerciveness}

In a similar way to \cite{AS-2017,CHS-2020,CZ-2018},
this section is devoted to the study of solutions for the generalized Nash equilibrium
problem proposed by Rosen on unbounded strategy sets. We propose three different coerciveness conditions and we compare these with some given in the literature.

\,

We say that the RGNEP satisfies the
coerciveness condition
$(C_0)$ at $\rho>0$ if:
\begin{enumerate}
\item $X\cap \overline{B}_\rho\neq\emptyset$  and
\item for each $x\in X$ with $\|x\|=\rho$, there exists $y\in X$ such that $\|y\|<\rho$ and
$\theta_\nu(y^\nu,x^{-\nu})\leq\theta_\nu(x)$,  for all $\nu\in N$.
\end{enumerate}


We now state our first existence result under coerciveness condition $(C_0)$ and convexity assumption. 

\begin{theorem}\label{TC0}
Assume that $X$ is a convex, closed and non-empty subset of $\R^n$, and each objective function is continuous. If the RGNEP satisfies the coerciveness condition $(C_0)$ and each objective function is convex with respect to its player's variable, then 
the set $\Rgnep(\{\theta_\nu, X\}_{\nu\in N})$ is non-empty.
\end{theorem}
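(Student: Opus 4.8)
The plan is to obtain the equilibrium by truncating $X$ to a ball on which the coerciveness condition bites, solving the resulting compact problem through the two‑player reformulation of Proposition \ref{P1}, and then using $(C_0)$ to transfer the solution back to $X$. The decisive observation is that one should \emph{not} try to transfer the RGNEP property of a truncated solution directly — a profitable unilateral deviation of the full game may leave the ball, so the truncated game does not see it — but rather the stronger property ``$\hat x$ minimizes the aggregate $f_2(\hat x,\cdot)$ over the strategy set'', which does survive the passage from the ball to $X$ precisely because $f_2$ is convex in its second argument when each $\theta_\nu$ is convex in its player's variable.

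Concretely, let $\rho>0$ be a radius at which $(C_0)$ holds and put $X_\rho:=X\cap\overline B_\rho$. By the first requirement in $(C_0)$ this set is non‑empty, and it is convex (intersection of convex sets) and compact (closed and bounded). First I would apply Theorem \ref{D} to the two‑player game $\{f_i,X_\rho\}_{i\in\{1,2\}}$ with $f_1,f_2$ as in \eqref{neo-games}: both $f_1(\cdot,y)=\|\cdot-y\|$ and $f_2(x,\cdot)=\sum_{\nu\in N}\theta_\nu((\cdot)^\nu,x^{-\nu})$ are continuous and convex — hence quasi‑convex — in the relevant player's variable, the latter because each $\theta_\nu$ is convex in its player's variable. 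This produces an equilibrium $(\hat x,\hat y)$; exactly as in the proof of Proposition \ref{P1}, the first player's optimality forces $\hat y=\hat x$, and the second player's optimality then reads
\[
f_2(\hat x,\hat x)\le f_2(\hat x,y)\qquad\text{for all }y\in X_\rho .
\]

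The core step is to upgrade this to the same inequality for all $y\in X$. If $\|\hat x\|<\rho$ it is immediate: for $y\in X$ the point $(1-t)\hat x+ty$ lies in $X$ and, for small $t>0$, in $\overline B_\rho$, hence in $X_\rho$, so convexity of $f_2(\hat x,\cdot)$ together with the displayed minimality gives $f_2(\hat x,\hat x)\le f_2(\hat x,y)$. The case $\|\hat x\|=\rho$ is where the second requirement in $(C_0)$ enters, and it is the heart of the argument: it supplies $w\in X$ with $\|w\|<\rho$ and $\theta_\nu(w^\nu,\hat x^{-\nu})\le\theta_\nu(\hat x)$ for every $\nu\in N$; summing over $\nu$ yields $f_2(\hat x,w)\le f_2(\hat x,\hat x)$, and since $w\in X_\rho$ the minimality forces $f_2(\hat x,w)=f_2(\hat x,\hat x)$. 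Thus $w$ is itself a minimizer of $f_2(\hat x,\cdot)$ over $X_\rho$ that lies in the open ball, and rerunning the segment argument with $w$ in place of $\hat x$ (keeping the same objective $f_2(\hat x,\cdot)$) delivers $f_2(\hat x,\hat x)=f_2(\hat x,w)\le f_2(\hat x,y)$ for all $y\in X$.

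It then remains only to specialize, for each player $\mu\in N$, the inequality $f_2(\hat x,\hat x)\le f_2(\hat x,y)$ to $y=(w^\mu,\hat x^{-\mu})$ with $(w^\mu,\hat x^{-\mu})\in X$ arbitrary: all terms with index $\nu\neq\mu$ cancel and one is left with $\theta_\mu(\hat x)\le\theta_\mu(w^\mu,\hat x^{-\mu})$, that is, $\hat x\in\Rgnep(\{\theta_\nu,X\}_{\nu\in N})$. The obstacle is thus entirely concentrated in the boundary case $\|\hat x\|=\rho$, where the naive transfer of the equilibrium property fails and one must go through the aggregate $f_2$; this is also the reason convexity rather than mere quasi‑convexity is imposed here, since $f_2(\hat x,\cdot)$ must be convex for both the ``interior minimizer'' step and the final segment argument to go through.
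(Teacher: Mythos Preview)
Your proof is correct and follows essentially the same route as the paper. The paper works with the Nikaido--Isoda function $f(x,y)=\sum_{\nu}\theta_\nu(y^\nu,x^{-\nu})-\theta_\nu(x)=f_2(x,y)-f_2(x,x)$, obtains the compact solution on $X_\rho$ via Ky Fan's minimax theorem rather than via Theorem~\ref{D} applied to the two-player game of Proposition~\ref{P1}, and then performs the same convexity-plus-segment argument (in the boundary case it takes the convex combination of the coercivity point with a violation point directly, where you first identify the coercivity point as an interior minimizer and rerun the interior case); these are cosmetic differences on top of the same strategy of minimizing the aggregate over $X_\rho$ and using $(C_0)$ with convexity of $f_2(\hat x,\cdot)$ to extend to $X$.
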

\begin{proof} 
Consider the Nikaido-Isoda function $f:\R^n\times\R^n\to\R$ \cite{Nikaido-Isoda} defined as
\[
 f(x,y)=\sum_{\nu\in N}\theta_\nu(y^{\nu},x^{-\nu})-\theta_\nu(x)
\]
which is continuous in both arguments and convex with respect to its second one. Now, to show that the RGNEP admits at least one solution, it is enough to show that there exists a point $\hat{x}\in X$ such that
\begin{align}\label{EP}
 f(\hat{x},y)\geq0,\mbox{ for all }y\in X,
\end{align}
due to this point $\hat{x}$ being a generalized Nash equilibrium.
We notice that for $X_\rho= X\cap \overline{B_\rho}$, by the famous Fan's minimax theorem, there is a point $\hat{x}\in X_\rho$ such that
\begin{align}\label{EP2}
 f(\hat{x},y)\geq0,\mbox{ for all }y\in X_\rho.
\end{align}

If $\hat{x}$ does not verify \eqref{EP}, then there exists $z\in X$ such that $f(\hat{x},z)<0$. Now, if $\|\hat{x}\|<\rho$, then by convexity of
$f(\hat{x},\cdot)$ and the fact that $f(\hat{x},\hat{x})=0$ we have there exists $t\in]0,1[$ such that $t\hat{x}+(1-t)z\in X_\rho$ and
$f(\hat{x},t\hat{x}+(1-t)z)<0$, which is a contradiction with \eqref{EP2}.

If $\|\hat{x}\|=\rho$, the coerciveness condition $(C_0)$ implies that
 there is $y\in X$ such that $\|y\|<\rho$ and
$f(\hat{x},y)\leq0$. By convexity of $f(\hat{x},\cdot)$
there exists $t\in]0,1[$ such that $ty+(1-t)z\in X_\rho$ and
$f(\hat{x},ty+(1-t)z)<0$, and we again get a contradiction with \eqref{EP2}.
\end{proof}

We stablish below another existence result under semi strict quasi-convexity. 
We say that the RGNEP satisfies the coerciveness condition $(C_1)$ at $\rho>0$ if:
\begin{enumerate}
\item $X\cap \overline{B}_\rho\neq\emptyset$  and
\item  for each $x\in X$ with $\|x\|=\rho$, there exists $y\in \mathcal{X}(x)$ such that $(y^\nu,x^{-\nu})\in B_\rho$ and $\theta_\nu(y^\nu,x^{-\nu})\leq\theta_\nu(x),$ for all $\nu\in N$.
\end{enumerate}

\begin{theorem}\label{TC1}
Assume that $X$ is a convex, closed and non-empty subset of $\R^n$, and each objective function is pseudo-continuous. If the RGNEP satisfies the coerciveness condition $(C_1)$ and each objective function is semi strictly quasi-convex with respect to its player's variable;
then 
the set $\Rgnep(\{\theta_\nu, X\}_{\nu\in N})$ is non-empty.
\end{theorem}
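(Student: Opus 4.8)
The plan is to truncate $X$ to a large closed ball, apply an existence result for the compact case to the truncated game, and then use the coerciveness condition $(C_1)$ to show that the equilibrium so obtained is already an equilibrium of the full, unbounded game. In contrast with the proof of Theorem~\ref{TC0}, we cannot work with the Nikaido--Isoda function here, since semi strict quasi-convexity is not preserved under sums; instead we appeal directly to Theorem~\ref{Rosen-pseudocontinuous}.

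First I would fix $\rho>0$ at which $(C_1)$ holds and put $X_\rho:=X\cap\overline{B}_\rho$. Since $X$ is convex and closed and $\overline{B}_\rho$ is convex and compact, $X_\rho$ is convex and compact, and it is non-empty by part~(1) of $(C_1)$. The game $\Rgnep(\{\theta_\nu,X_\rho\}_{\nu\in N})$ has the same objective functions, which are pseudo-continuous and quasi-convex with respect to each player's variable (semi strict quasi-convexity implies quasi-convexity), so Theorem~\ref{Rosen-pseudocontinuous} provides $\hat{x}\in X_\rho$ with $\hat{x}\in\Rgnep(\{\theta_\nu,X_\rho\}_{\nu\in N})$; explicitly, for every $\nu\in N$,
\[
\theta_\nu(\hat{x})\leq\theta_\nu(y^\nu,\hat{x}^{-\nu})\quad\text{whenever }(y^\nu,\hat{x}^{-\nu})\in X\text{ and }\|(y^\nu,\hat{x}^{-\nu})\|\leq\rho.
\]

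It then remains to remove the norm restriction, which I would do by contradiction: suppose some $\nu\in N$ and some $w^\nu$ with $(w^\nu,\hat{x}^{-\nu})\in X$ satisfy $\theta_\nu(w^\nu,\hat{x}^{-\nu})<\theta_\nu(\hat{x})$, and split according to whether $\|\hat{x}\|<\rho$ or $\|\hat{x}\|=\rho$. If $\|\hat{x}\|<\rho$, semi strict quasi-convexity of $\theta_\nu(\cdot,\hat{x}^{-\nu})$ gives $\theta_\nu(tw^\nu+(1-t)\hat{x}^\nu,\hat{x}^{-\nu})<\theta_\nu(\hat{x})$ for all $t\in\,]0,1[$; by convexity of $X$ and continuity of the norm, $(tw^\nu+(1-t)\hat{x}^\nu,\hat{x}^{-\nu})\in X_\rho$ for $t$ small, contradicting optimality of $\hat{x}$ on $X_\rho$. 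If $\|\hat{x}\|=\rho$, I would invoke $(C_1)$ to choose $y\in\mathcal{X}(\hat{x})$ with $(y^\nu,\hat{x}^{-\nu})\in B_\rho$ and $\theta_\nu(y^\nu,\hat{x}^{-\nu})\leq\theta_\nu(\hat{x})$ for all $\nu$; since $y^\nu\in X_\nu(\hat{x}^{-\nu})$ means $(y^\nu,\hat{x}^{-\nu})\in X$, these points lie in $X_\rho$, so optimality of $\hat{x}$ forces $\theta_\nu(y^\nu,\hat{x}^{-\nu})=\theta_\nu(\hat{x})$. Then semi strict quasi-convexity applied to $\theta_\nu(w^\nu,\hat{x}^{-\nu})<\theta_\nu(y^\nu,\hat{x}^{-\nu})$ gives $\theta_\nu(tw^\nu+(1-t)y^\nu,\hat{x}^{-\nu})<\theta_\nu(\hat{x})$ for $t\in\,]0,1[$, and since $(y^\nu,\hat{x}^{-\nu})$ lies in the open ball $B_\rho$ and in $X$, convexity of $X$ again places $(tw^\nu+(1-t)y^\nu,\hat{x}^{-\nu})$ in $X_\rho$ for $t$ small, a contradiction. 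Hence $\hat{x}\in\Rgnep(\{\theta_\nu,X\}_{\nu\in N})$.

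The routine ingredients — compactness and convexity of $X_\rho$, the fact that a short segment issuing from a point strictly inside $\overline{B}_\rho$ stays inside it, and the translation between the equilibrium condition and the family of scalar inequalities — require only the definitions. The delicate point is the boundary case $\|\hat{x}\|=\rho$: the truncation may create a spurious minimizer on $\partial\overline{B}_\rho$, and it is precisely condition $(C_1)$ — supplying a feasible strategy strictly inside the ball that is no worse for every player — together with semi strict (rather than plain) quasi-convexity that lets us slide off the boundary toward an arbitrary competing strategy and reach the contradiction. One must be careful that the point furnished by $(C_1)$ is feasible for player $\nu$'s own subproblem, which is exactly why the condition is stated with $y\in\mathcal{X}(\hat{x})$ rather than merely $y\in X$.
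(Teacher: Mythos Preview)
Your proof is correct and follows essentially the same approach as the paper: truncate to $X_\rho=X\cap\overline{B}_\rho$, apply Theorem~\ref{Rosen-pseudocontinuous}, and argue by contradiction with a case split on $\|\hat{x}\|<\rho$ versus $\|\hat{x}\|=\rho$, using semi strict quasi-convexity and condition $(C_1)$ in the boundary case. The only cosmetic difference is that in the boundary case you first deduce the equality $\theta_\nu(y^\nu,\hat{x}^{-\nu})=\theta_\nu(\hat{x})$ before invoking semi strict quasi-convexity, whereas the paper applies semi strict quasi-convexity directly to the segment between $(x^{\nu_0},\hat{x}^{-\nu_0})$ and $(y^{\nu_0},\hat{x}^{-\nu_0})$; both routes yield the same contradiction.
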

\begin{proof}
By considering the RGNEP defined by $X_\rho=X\cap\overline{B}_{\rho}$ and the objective functions $\theta_\nu$. By Theorem \ref{Rosen-pseudocontinuous}, there exists $\hat{x}\in X_\rho$ such that
for each $\nu$
\[
\theta_\nu(\hat{x})\leq \theta_\nu(x^\nu,\hat{x}^{-\nu})\mbox{  for all }x^{\nu}\mbox{ such that }(x^\nu,\hat{x}^{-\nu})\in X_\rho.
\]
If $\hat{x}$ is not a generalized Nash equilibrium for the RGNEP associated $X$, then there exists $\nu_0$ and $x^{\nu_0}$ such that $(x^{\nu_0},\hat{x}^{-\nu_0})\in X$ and
$\theta_{\nu_0}(x^{\nu_0},\hat{x}^{-\nu_0})<\theta_{\nu_0}(\hat{x})$.
Thus, $x=(x^{\nu_0},\hat{x}^{-\nu_0})\notin X_\rho$.

Now, if $\|\hat{x}\|<\rho$, then by the semi-strictly quasi-convexity of $\theta_{\nu_0}(\cdot,\hat{x}^{-\nu_0})$ there exists $t\in]0,1[$ such that $z=t\hat{x}+(1-t)x\in X_\rho$  and
$\theta_{\nu_0}(z)<\theta_{\nu_0}(\hat{x})$,
which is a contradiction. Now, if $\|\hat{x}\|=\rho$,  by coerciveness condition $(C_1)$, there is $y\in\mathcal{X}(\hat{x})$ such that
$(y^{\nu_0},\hat{x}^{-\nu_0})\in B_\rho$ and $\theta_{\nu_0}(y^{\nu_0},\hat{x}^{-\nu_0})\leq \theta_{\nu_0}(\hat{x})$.
It is clear that the vector $w=(y^{\nu_0},\hat{x}^{-\nu_0})\in X$ and this implies the existence of some $t\in]0,1[$ such that  $z=tx+(1-t)w\in X_\rho$. By semi strict quasi-convexity of $\theta_{\nu_0}(\cdot,\hat{x}^{-\nu_0})$, we obtain
$\theta_{\nu_0}(z)<\theta_{\nu_0}(\hat{x})$ and we get a contradiction. Therefore, the proof is complete.
\end{proof}

We now give an existence result under  quasi-convexity assumption. 
We say that the RGNEP satisfies the coerciveness condition $(C_2)$ at $\rho>0$ if:

\begin{enumerate}
\item $X\cap \overline{B}_\rho\neq\emptyset$ and
\item  for each $x\in X\setminus  \overline{B}_{\rho}$, there exists $y\in \mathcal{X}(x)$ such that $(y^\nu,x^{-\nu})\in\overline{ B}_\rho$ and moreover $\theta_\nu(y^\nu,x^{-\nu})\leq\theta_\nu(x)$, for all $\nu\in N$.
\end{enumerate}

\begin{theorem}\label{CC2}
Assume that $X$ is a convex, closed and non-empty subset of $\R^n$, and each objective function is pseudo-continuous. If the RGNEP satisfies the coerciveness condition $(C_2)$ and each objective function is quasi-convex with respect to its player's variable;
then 
the set $\Rgnep(\{\theta_\nu, X\}_{\nu\in N})$ is non-empty.

\end{theorem}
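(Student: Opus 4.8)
The plan is to follow the template of the proofs of Theorems \ref{TC0} and \ref{TC1}: first solve a compact truncation of the RGNEP, and then use the coerciveness condition $(C_2)$ to promote the truncated solution to a genuine generalized Nash equilibrium of the original non-compact game.

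Fix $\rho>0$ at which $(C_2)$ holds and put $X_\rho := X\cap\overline{B}_\rho$. The first clause of $(C_2)$ says $X_\rho\neq\emptyset$, and since $X$ is convex and closed while $\overline{B}_\rho$ is convex and compact, the set $X_\rho$ is convex, compact and non-empty. The objective functions $\theta_\nu$ are pseudo-continuous and quasi-convex in their own variable by hypothesis, so Theorem \ref{Rosen-pseudocontinuous} applies to the RGNEP defined by $X_\rho$ and $\{\theta_\nu\}_{\nu\in N}$ and yields $\hat{x}\in X_\rho$ with
\[
\theta_\nu(\hat{x})\le\theta_\nu(x^\nu,\hat{x}^{-\nu})\quad\text{for every }\nu\in N\text{ and every }x^\nu\text{ with }(x^\nu,\hat{x}^{-\nu})\in X_\rho .
\]

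I would then show $\hat{x}\in\Rgnep(\{\theta_\nu,X\}_{\nu\in N})$ by contradiction. If $\hat{x}$ is not an equilibrium of the full game, there are a player $\nu_0$ and a strategy $x^{\nu_0}$ such that $x:=(x^{\nu_0},\hat{x}^{-\nu_0})\in X$ and $\theta_{\nu_0}(x)<\theta_{\nu_0}(\hat{x})$. The optimality displayed above then forces $x\notin X_\rho$ --- otherwise that inequality for $\nu_0$ would give $\theta_{\nu_0}(\hat{x})\le\theta_{\nu_0}(x)$, contradicting $\theta_{\nu_0}(x)<\theta_{\nu_0}(\hat{x})$ --- hence $\|x\|>\rho$, i.e. $x\in X\setminus\overline{B}_\rho$. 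Applying the second clause of $(C_2)$ to this $x$ gives $y\in\mathcal{X}(x)$ with $(y^{\nu_0},x^{-\nu_0})\in\overline{B}_\rho$ and $\theta_{\nu_0}(y^{\nu_0},x^{-\nu_0})\le\theta_{\nu_0}(x)$. Since $x^{-\nu_0}=\hat{x}^{-\nu_0}$ and $y^{\nu_0}\in X_{\nu_0}(x^{-\nu_0})=X_{\nu_0}(\hat{x}^{-\nu_0})$, the vector $(y^{\nu_0},\hat{x}^{-\nu_0})$ belongs to $X$ and to $\overline{B}_\rho$, hence to $X_\rho$; but $\theta_{\nu_0}(y^{\nu_0},\hat{x}^{-\nu_0})\le\theta_{\nu_0}(x)<\theta_{\nu_0}(\hat{x})$ contradicts the optimality of $\hat{x}$ in $X_\rho$ for player $\nu_0$. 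Thus no improving deviation exists and $\hat{x}$ solves the RGNEP defined by $X$.

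Compared with the proof of Theorem \ref{TC1}, the point to stress is that here no interpolation through (semi strict) quasi-convexity is needed in the promotion step: condition $(C_2)$ is imposed on the entire exterior region $X\setminus\overline{B}_\rho$ rather than only on the sphere $\|x\|=\rho$, so the improving deviation $x$ can be pulled back inside the ball directly, in one move, with no case distinction on $\|\hat{x}\|$. Quasi-convexity (together with pseudo-continuity and the compactness of $X_\rho$) enters only through the invocation of Theorem \ref{Rosen-pseudocontinuous} on the truncated game. Accordingly, the only step demanding care is the final bookkeeping: verifying that the pulled-back strategy $(y^{\nu_0},\hat{x}^{-\nu_0})$ is simultaneously feasible for $X$ and contained in $\overline{B}_\rho$, so that it is an admissible --- and strictly improving --- deviation inside $X_\rho$; this I expect to be the only (mild) obstacle.
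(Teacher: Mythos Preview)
Your proposal is correct and follows essentially the same route as the paper's proof: apply Theorem \ref{Rosen-pseudocontinuous} to the truncation $X_\rho$, then use $(C_2)$ on the deviating point $x=(x^{\nu_0},\hat{x}^{-\nu_0})\in X\setminus\overline{B}_\rho$ to pull back to a strictly improving feasible point in $X_\rho$, yielding a contradiction. Your final paragraph is an accurate commentary on why no interpolation step is needed here, in contrast with Theorem \ref{TC1}.
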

\begin{proof}
Since $X$ is closed and nonempty, thanks to Lemma \ref{P1}, the coerciveness condition $(C_2)$ implies that  $X_\rho=X\cap\overline{B}_\rho$ is a compact, convex and non-empty subset of $\R^n$.  
By Theorem \ref{Rosen-pseudocontinuous}, there exists $\hat{x}\in\Rgnep(\{\theta_n,X_\rho\}_{\nu\in N})$, that is  $\hat{x}\in X_\rho$ such that
for each $\nu$
\[
\theta_\nu(\hat{x})\leq \theta_\nu(x^\nu,\hat{x}^{-\nu})\mbox{  for all }x^{\nu}\mbox{ such that }(x^\nu,\hat{x}^{-\nu})\in X_\rho.
\]
If $\hat{x}$ is not a generalized Nash equilibrium for the RGNEP associated $X$, then there exists $\nu_0$ and $x^{\nu_0}$ such that $(x^{\nu_0},\hat{x}^{-\nu_0})\in X$ and
\[
\theta_{\nu_0}(x^{\nu_0},\hat{x}^{-\nu_0})<\theta_{\nu_0}(\hat{x}).
\]
Thus $x=(x^{\nu_0},\hat{x}^{-\nu_0})\notin X_\rho$.
By the coerciveness condition $(C_2)$, there is $y\in \mathcal{X}(x)$ such that
\[
(y^{\nu_0},\hat{x}^{-\nu_0})\in \overline{B}_\rho\mbox{ and }\theta_{\nu_0}(y^{\nu_0},\hat{x}^{-\nu_0})\leq \theta_{\nu_0}(x).
\]
Since $y\in \mathcal{X}(x)$, we deduce that $(y^{\nu_0},\hat{x}^{-\nu_0})\in X\cap\overline{B}_\rho$. Thus,
\[
\theta_{\nu_0}(y^{\nu_0},\hat{x}^{-\nu_0})\leq \theta_{\nu_0}(x)<\theta_{\nu_0}(\hat{x}),
\]
which is a contradiction. Therefore, the proof is complete.
\end{proof}

\begin{remark}
Here we present some remarks about our coerciveness conditions:
\begin{enumerate}
\item First, we notice that $(C_2)$ implies $(C_1)$. Indeed, if $\rho>0$ is associated to $(C_2)$, then it is not complicated to verify that $(C_1)$ holds with $\rho_1=2\rho$.

\item Second, by considering the Euclidean norm in $\R^n$, for any $x,y\in\R^n$ such that $\|x\|=\rho$ and
$(y^{\nu},x^{-\nu})\in B_\rho$ for all $\nu\in N$,
we have $\|y\|<\rho.$ 
Thus $(C_1)$ implies $(C_0)$, if $X=K$. However, when $X\neq K$ this implication does not hold. Indeed,
consider  $X=\{(x^1,x^2)\in\R^2:~ x^1>0~\wedge~x^2\geq 1/x^1\}$, $\rho=2\sqrt{2}$, and the vectors $x=(2,2)$ and $y=(3/4,3/4)$. It is clear that $x\in X$, $y\in \mathcal{X}(x)$ and $  (2,3/4),(3/4,2)\in B_{2\sqrt{2}}$.
However, $y\notin X$, see Figure \ref{Ex2}.
\begin{figure}[h!]
\centering
\begin{tikzpicture}[scale=0.9,>=latex]
\fill[gray!35,samples=200,domain=0.2:5](0.2,5)--plot(\x,{1/\x})--(5,0.2)--(5,5)--(0.2,5);
\draw[domain=0.225:5,samples=200,line width=0.6,<->](0.2,5)--plot(\x,{1/\x})--(5,0.2);
\draw[domain=0:2.830,line width=0.7,samples=200]plot(\x,{sqrt(8-(\x)^2)});
\fill(2,2)circle(2pt)node[above right]{$x=(2,2)$};
\fill(3/4,3/4)circle(2pt)node at (0.5,0.7)[left]{$(3/4,3/4)=y$};
\draw[dotted,->](2,0.5)--(2,5);
\draw[dotted,->](0.5,2)--(5,2);
\draw[<->](0.5,5)--(0.5,0.5)--(5,0.5);
\draw[->](0,0)--(5,0)node[below right]{$x^1$};
\draw[->](0,0)--(0,5)node[left]{$x^2$};
\draw(3,0)node[below]{$B_\rho$};
\end{tikzpicture}
\caption{$y\in\mathcal{X}(x)$ but $y\notin X$. }\label{Ex2}
\end{figure}
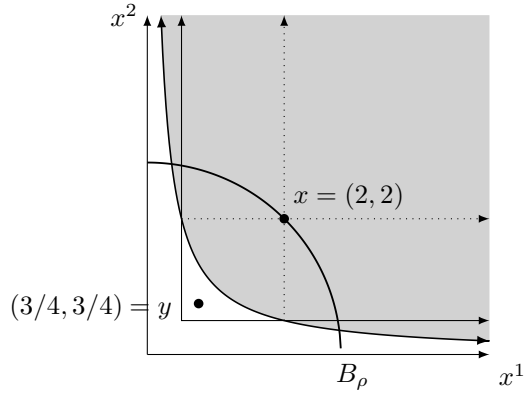

\item In the coerciveness condition in \cite{CHS-2020} it is assumed that $\mathcal{X}(x)\cap \overline{B}_\rho\neq\emptyset$, for all $x\in X\cap\overline{B}_\rho$. However, in our case we have $x\in\mathcal{X}(x)$ for every $x\in X$. Thus, this assumption is trivially satisfied.  Thanks to the above remark, we can adopt this coerciveness condition to our case as below: $(C_3)$ at $\rho>0$ if 
\begin{itemize}
\item $X\cap \overline{B}_\rho\neq\emptyset$, and
\item  for each $x\in X\setminus \overline{B}_\rho$, there exists $y\in \mathcal{X}(x)$ with $\|y\|<\rho$ and moreover $\theta_\nu(y^\nu,x^{-\nu})\leq\theta_\nu(x)$, for all $\nu\in N$.
\end{itemize}
Thus, $(C_3)$  is weaker than 
$(C_1)$. On the other hand, the following example says that Theorem \ref{TC0} is not a consequence from Theorem 4.6 in \cite{CHS-2020}. As a matter fact, consider $X$ as in the above remark,
and the functions $\theta_1,\theta_2:\R^2\to\R$ defined as
\[
\theta_1(x^1,x^2)=\theta_2(x^1,x^2)=(x^1)^2+(x^2)^2.
\]
Clearly,  for $x=(2\sqrt{2},\sqrt{2}/4)$ there is not $y\in \mathcal{X}(x)$ such that $\|y\|<2\sqrt{2}$. That means, this Nash game proposed by Rosen does not satisfy the coerciveness condition $(C_3)$. However, it satisfies $(C_0)$.

\item Regarding the above remark and the second part of the coerciveness condition in \cite{CZ-2018}, we notice that, if  $X$ is closed, convex and non-empty, and $\mathcal{X}(u)\cap\overline{B}_\rho\neq\emptyset$ for all $u\in X$; then the set $X\cap \overline{B}_\rho$ is non-empty. Indeed, assume that $X\cap \overline{B}_\rho=\emptyset$.
Let $u$ be an element of $X$ such that $\|u\|$ is minimum,
 and $z$ be an element of $\mathcal{X}(u)\cap\overline{B}_\rho$. Then $\|u\|>\rho\geq \|z\|$ and $(z^\nu,u^{-\nu})\in X$ for all $\nu$. It is clear that
\[
z=\sum_{\nu}(z^{\nu},u^{-\nu})-(p-1)u
\]
Thus,
 $tu+(1-t)z=(t-(1-t)(p-1))u+(1-t)\sum_{\nu}(z^{\nu},u^{-\nu})\in X$
for all $\dfrac{p-1}{p}<t\leq 1$. This implies $\|tu+(1-t)z\|\leq t\|u\|+(1-t)\|z\|<\|u\|$ and we get a contradiction. 

Now, the converse of this affirmation does not hold in general.  Indeed,
consider the set $X=\{(x^1,x^2)\in\R^2: x^2\geq x^1\geq0\}$ and $\rho=1$. It is clear that $X\cap \overline{B}_1\neq\emptyset$. However, $\mathcal{X}(2,2)=\{(x^1,x^2)\in\R^2:~0\leq x^1\leq 2\wedge x^2\geq 2\}$ and $\mathcal{X}(2,2)\cap \overline{B}_1=\emptyset$, see Figure \ref{Ex1}.
\begin{figure}[h!]
\centering
\begin{tikzpicture}[>=latex,scale=0.8]
\fill[gray!30](0,0)--(0,3)--(3,3)--(0,0);
\draw[fill=gray!20](0,0)circle(1);
\fill[gray!45](0,3)--(0,2)--(2,2)--(2,3)--(0,3);
\draw[line width=0.7](0,3)--(0,0)--(3,3);
\fill(2,2)circle(1.25pt)node[right]{$(2,2)$};
\draw(1,2.5)node[]{$\mathcal{X}(2,2)$};
\draw[line width=0.7](0,2)--(2,2)--(2,3);
\draw[->](-1.5,0)--(3,0)node[below]{$x^1$};
\draw[->](0,-1.5)--(0,3)node[left]{$x^2$};
\draw(0.85,-0.85)node[right]{$B_1$};
\end{tikzpicture}
\caption{$\mathcal{X}(u)\cap \overline{B}_\rho=\emptyset$.}\label{Ex1}
\end{figure}
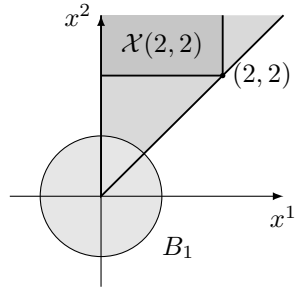

\item Finally,  Theorem \ref{CC2} is not a consequence of Theorem 3.2 in \cite{CZ-2018}, because any subset $K_\nu$ is not necessary closed. For instance, consider $X$ as in the second remark. We can see that $K_1=K_2=]0,+\infty[$. 
\end{enumerate}

\end{remark}

\section*{Conclusions}

The GNEP proposed by Rosen can be reduced to a classical Nash game, and as a consequence, we proved that Rosen's theorem follows from a classical result given by Arrow and Debreu. We also present an existence result for these kinds of games under quasi-convexity and pseudo-continuity. Moreover, sufficient and necessary conditions are established for a point to be a generalized Nash equilibrium using variational inequalities. Finally, we give some coerciveness conditions to guarantee the existence of generalized Nash equilibria on unbounded constraints.

\bibliographystyle{spmpsci}      

\begin{thebibliography}{10}

\bibitem{Arrow-Debreu}
K.~J. Arrow and G.~Debreu.
\newblock Existence of an equilibrium for a competitive economy.
\newblock {\em Econometrica}, 22(3):265--290, 1954.

\bibitem{JP-Aubin-1998}
J.~P. Aubin.
\newblock {\em Optima and equilibria : an introduction to nonlinear analysis /
  Jean-Pierre Aubin ; translated from the French by Stephen Wilson.}
\newblock Graduate texts in mathematics, 140. Springer, Berlin ;, 2nd ed.
  edition, 1998.

\bibitem{Aussel-book}
D.~Aussel.
\newblock {New developments in Quasiconvex optimization}.
\newblock In {\em {Fixed Point Theory, Variational Analysis, and
  Optimization}}, pages 173--208. Taylor \& Francis, 2014.

\bibitem{AC-2013}
D.~Aussel and J.~Cotrina.
\newblock {Quasimonotone quasivariational inequalities: existence results and
  applications}.
\newblock {\em J. Optim. Theory Appl.}, 158:637--652, 2013.

\bibitem{Aussel-Dutta}
D.~Aussel and J.~Dutta.
\newblock {Generalized Nash Equilibrium Problem, Variational Inequality and
  Quasiconvexity}.
\newblock {\em Oper. Res. Lett.}, 36(4):461--464, 2008.

\bibitem{AAD2014}
D.~Aussel and J.~Dutta.
\newblock Addendum to “generalized Nash equilibrium problem, variational
  inequality and quasiconvexity” [Oper. Res. Lett. 36 (4) (2008) 461–464].
\newblock {\em Oper. Res. Lett.}, 42(6):398, 2014.

\bibitem{AH04}
D.~Aussel and N.~Hadjisavvas.
\newblock {On Quasimonotone Variational Inequalities}.
\newblock {\em J. Optim. Theory Appl.}, 121(2):445--450, 2004.

\bibitem{AuHad2005}
D.~Aussel and N.~Hadjisavvas.
\newblock {Adjusted Sublevel Sets, Normal Operator, and Quasi-convex
  Programming}.
\newblock {\em SIAM Journal on Optimization}, 16(2):358--367, 2005.

\bibitem{AS-2017}
D.~Aussel and A.~Sultana.
\newblock {Quasi-variational inequality problems with non-compact valued
  constraint maps}.
\newblock {\em J. Math Anal. Appl.}, 456(2):1482--1494, 2017.

\bibitem{BCC21}
O.~Bueno, C.~Calder\'on, and J.~Cotrina.
\newblock {A note on coupled constraint Nash games}.
\newblock {\em Preprint on ArXiv: 2201.04262, pp. 11}, 2022.

\bibitem{BueCot16}
O.~Bueno and J.~Cotrina.
\newblock On maximality of quasimonotone operators.
\newblock {\em Set-Valued and Variational Analysis}, 27(1):87--101, 2019.

\bibitem{Cavazzuti}
E.~Cavazzuti, M.~Pappalardo, and M.~Passacantando.
\newblock Nash equilibria, variational inequalities, and dynamical systems.
\newblock {\em  J. Optim. Theory Appl.},
  114(3):491--506, 2002.

\bibitem{CK-2015}
J.~Contreras, J.~B. Krawczyk, and J.~Zuccollo.
\newblock Economics of collective monitoring: a study of environmentally
  constrained electricity generators.
\newblock {\em CMS}, 13(3):349--369, 2016.


\bibitem{CHS-2020}
J.~Cotrina, A.~Hantoute, and A.~Svensson.
\newblock Existence of quasi-equilibria on unbounded constraint sets.
\newblock {\em Optimization}, 0(0):1--18, 2020.

\bibitem{CZ-2018}
J.~{Cotrina} and J.~{Z{\'u}{\~n}iga}.
\newblock {Time-dependent generalized Nash equilibrium problem}.
\newblock {\em J. Optim. Theory Appl.}, 179:1054--1064, 2018.


\bibitem{Co21}
J.~Cotrina.
\newblock {Remarks on pseudo-continuity}.
\newblock {\em Minimax Theory and its Applications}, Accepted, 2022.

\bibitem{Facchinei2007}
F.~Facchinei and C.~Kanzow.
\newblock {Generalized Nash equilibrium problems}.
\newblock {\em 4OR}, 5(3):173--210, Sep 2007.

\bibitem{FACCHINEI2007159}
F.~Facchinei, A.~Fischer, and V.~Piccialli.
\newblock On generalized Nash games and variational inequalities.
\newblock {\em Oper. Res. Let}, 35(2):159 -- 164, 2007.

\bibitem{Fan}
K.~Fan.
\newblock Fixed-point and minimax theorems in locally convex topological linear
  spaces.
\newblock {\em Proc. Nat. Acad.  Sci. USA}, 38(2):121--126, 1952.

\bibitem{Glicksberg}
I.~L. Glicksberg.
\newblock A further generalization of the Kakutani fixed point theorem, with
  application to Nash equilibrium points.
\newblock {\em Proc.  Am. Math. Soc.},
  3(1):170--174, 1952.

\bibitem{HARKER199181}
P.~T. Harker.
\newblock {Generalized Nash games and quasi-variational inequalities}.
\newblock {\em Eur. J. Oper. Res.}, 54(1):81 -- 94, 1991.

\bibitem{John2001}
R.~John.
\newblock {A Note on Minty Variational Inequalities and Generalized
  Monotonicity}.
\newblock In N.~Hadjisavvas, J.~E. Mart{\'i}nez-Legaz, and J.-P. Penot,
  editors, {\em {Generalized Convexity and Generalized Monotonicity:
  Proceedings of the 6th International Symposium on Generalized
  Convexity/Monotonicity, Samos, September 1999}}, pages 240--246, Berlin,
  Heidelberg, 2001. Springer Berlin Heidelberg.

\bibitem{K-2005}
J.~B. Krawczyk.
\newblock Coupled constraint Nash equilibria in environmental games.
\newblock {\em Resour. Energy Econ.}, 27(2):157 -- 181, 2005.

\bibitem{K-2016}
J.~B. Krawczyk and M.~Tidball.
\newblock Economic problems with constraints: How efficiency relates to
  equilibrium.
\newblock {\em Int. Game Theory Rev.}, 18(04):1650011, 2016.

\bibitem{Piotr}
P.~Ma\'ckowiac.
\newblock {Some remarks on lower hemicontinuity of convex multivalued
  mappings}.
\newblock {\em Econ. Theory}, 28:227--233, 2006.

\bibitem{MORGAN2007}
J.~Morgan and V.~Scalzo.
\newblock Pseudocontinuous functions and existence of Nash equilibria.
\newblock {\em J. Math. Econ.}, 43(2):174--183, 2007.

\bibitem{Nash}
J.~Nash.
\newblock Non-cooperative games.
\newblock {\em Ann. Math.}, 54(2):286--295, 1951.

\bibitem{Nikaido-Isoda}
H.~Nikaid{\^o} and K.~Isoda.
\newblock {Note on noncooperative convex games}.
\newblock {\em Pacific J. Math.}, 52:807--815, 1955.

\bibitem{Ponstein}
J.~Ponstein.
\newblock Existence of equilibrium points in non-product spaces.
\newblock {\em SIAM Journal on Applied Mathematics}, 14(1):181--190, 1966.


\bibitem{Rosen}
J.~B. Rosen.
\newblock Existence and uniqueness of equilibrium points for concave n-person
  games.
\newblock {\em Econometrica}, 33(3):520--534, 1965.

\bibitem{Scalzo}
V.~Scalzo.
\newblock Pseudocontinuity is necessary and sufficient for order-preserving
  continuous representations.
\newblock {\em Real Anal. Exchange}, 34(1):239--248, 2009.

\bibitem{Vardar2019}
B.~Vardar and G.~Zaccour.
\newblock Strategic bilateral exchange of a bad.
\newblock {\em Oper. Res. Lett.}, 47(4):235 -- 240, 2019.

\bibitem{Yu-Fa-Yang2016}
J.~Yu, N.-F. Wang, and Z.~Yang.
\newblock {Equivalence results between Nash equilibrium theorem and some fixed
  point theorems}.
\newblock {\em Fixed Point Theory Appl}, 2016(1):69, 2016.

\end{thebibliography}

\end{document}